\providecommand\given{} \newcommand\SetSymbol[1][]{\mathrel{}\mathclose{}#1|\allowbreak\mathopen{}\mathrel{}}
\DeclarePairedDelimiterX\Set[1]{\lbrace}{\rbrace}{\renewcommand\given{\SetSymbol[\delimsize]} #1 }
\tikzset{
	pf/.style={commutative diagrams/.cd, every arrow, every label},
	surj/.style=commutative diagrams/two heads,
	inj/.style=commutative diagrams/hook,
	gl/.style=commutative diagrams/equal,
	mat/.style={matrix of math nodes, commutative diagrams/.cd, every cell},
	dr/.style={matrix of math nodes, commutative diagrams/.cd, every cell, column sep=small},
	seq/.style={matrix of math nodes, commutative diagrams/.cd, every cell, column sep=small}
	}
\newenvironment{diag}{\begin{equation}\begin{tikzpicture}[commutative diagrams/.cd, every diagram, baseline=(current bounding box.center)]}{\end{tikzpicture}\end{equation}\ignorespacesafterend}
\theoremstyle{plain}
\newtheorem{prop}[equation]{Proposition}
\newtheorem*{thm*}{Theorem}
\newtheorem*{prop*}{Proposition}
\newtheorem*{principle*}{Principle}
\theoremstyle{definition}
\newtheorem{cor}[equation]{Corollary}
\newtheorem{lemma}[equation]{Lemma}
\newtheorem{defn}[equation]{Definition}
\newtheorem*{cor*}{Corollary}
\newtheorem*{lemma*}{Lemma}
\newtheorem*{defn*}{Definition}
\theoremstyle{remark}
\newtheorem{rem}[equation]{Remark}
\newtheorem{ex}[equation]{Example}
\newtheorem*{rem*}{Remark}
\newtheorem*{ex*}{Example}
\author{Enno Keßler \and Yuri I. Manin \and Yingying Wu}
\title{Moduli spaces of SUSY curves\\ and their operads}
\date{}
\setlist[enumerate,1]{label = (\roman*)}
\newcommand{\cat}[1]{\mathsf{#1}}
\DeclareMathOperator{\id}{id}
\DeclareMathOperator{\Ber}{Ber}
\DeclareMathOperator{\Aut}{Aut}
\DeclareMathOperator{\Iso}{Iso}
\newcommand{\ic}{\mathrm{i}}
\newcommand{\Z}{\mathbb{Z}}
\newcommand{\C}{\mathbb{C}}
\newcommand{\cD}{\mathcal{D}}
\newcommand{\cO}{\mathcal{O}}
\newcommand{\cT}{\mathcal{T}}
\begin{document}
\maketitle
\begin{abstract}
	This article generalizes the operad of moduli spaces of curves to SUSY curves.
	SUSY curves are algebraic curves with additional supersymmetric or supergeometric structure.
	Here, we focus on the description of the relevant category of graphs and its combinatorics as well as the construction of dual graphs of SUSY curves and the SUSY operad taking values in a category of moduli spaces of SUSY curves with Neveu--Schwarz and Ramond punctures.
\end{abstract}

\section{Introduction and summary}\label{Sec:IntroductionAndSummary}
In the earlier works (cf.~\cite{LM-EMO} and references therein), it was shown that operadic constructions developed in~\cite{G-OMSG0RS} can be extended to include additional data on algebraic curves.
In~\cite{BM-GOIC} a general formalism of labeled graphs and generalized operads was developed.
Here, we combine both techniques to construct an operad that encodes geometric properties of moduli spaces of SUSY curves with Neveu--Schwarz and Ramond punctures.

SUSY curves, also called super Riemann surfaces or super curves, are supergeometric generalizations of algebraic curves with spin structure that have been introduced in the context of superstring theory, see~\cite{F-NSTTDCFT}.
Their most distinctive feature for the context of this work is that families of SUSY curves may develop two types of nodes as was argued in~\cite{D-LaM}.
The two types of nodes are now called Neveu--Schwarz and Ramond nodes and one also considers markings of those two types: Neveu--Schwarz punctures and Ramond punctures.
Compact moduli spaces \(\overline{\mathcal{M}}_{g,I_{NS},I_R}\) of stable SUSY curves of genus \(g\) with  Neveu--Schwarz labeled by the finite set \(I_{NS}\) and Ramond punctures labeled by the finite set \(I_R\) have been constructed in~\cite{FKP-MSSCCLB} as smooth Deligne--Mumford superstacks.

To achieve a combinatorial description of stable SUSY curves and the boundary strata of the moduli stack~\(\overline{\mathcal{M}}_{g,I_{NS},I_R}\), we introduce in Definition~\ref{defn:SUSYGraph} below the category of stable SUSY graphs \(\cat{SGr^{st}}\).
Stable SUSY graphs are stable modular graphs with an additional coloring of the edges and tails to encode if nodes or markings are of Neveu--Schwarz or Ramond type.
In Definition~\ref{defn:DualGraph}, we show how a stable SUSY curve has a stable SUSY graph as dual graph.

SUSY operads in the framework of~\cite{BM-GOIC} are functors \(\cat{SGr^{st}}\to \cat{C}\) that send graftings to isomorphisms.
SUSY operads generalize modular operads that have been introduced in~\cite{GK-MO}.
In particular, the modular operad \(\cat{o}\colon \cat{MGr^{st}}\to \cat{M}\) sending modular graphs to products of moduli stacks of algebraic curves and contractions to gluings is known to encode important combinatorial data for moduli stacks of curves.
The main result of the paper is the following supergeometric generalization:

\begin{restatable*}{thm}{Operad}\label{thm:SUSYOperad}
	There is a category \(\cat{SM}\) such that the map
	\begin{equation}
		\tau \mapsto \prod_{v\in V_\tau} \overline{\mathcal{M}}_{g(v), F_{\tau, NS}(v), F_{\tau, R}(v)}
	\end{equation}
	can be extended to an operad \(\cat{O}\colon \cat{SGr^{st}}\to \cat{SM}\) which sends graftings to the identity, isomorphisms of corollas to isomorphisms of the moduli superstacks, and contractions of edges of Neveu--Schwarz resp.\ Ramond type to gluings of punctures of Neveu--Schwarz resp.\ Ramond type.

	Furthermore, there is a functor \(\cat{P}\colon \cat{SM}\to \cat{M}\) such that \(\cat{o}\circ \cat{F} = \cat{P} \circ \cat{O}\) for the forgetful functor \(\cat{F}\colon \cat{SGr^{st}}\to \cat{MGr^{st}}\).
\end{restatable*}

Using this Theorem, one obtains a descriptions of the moduli space of SUSY curves with prescribed dual graph in terms of gluing data and its dimension.
This is a generalization of a description of \(\overline{\mathcal{M}}_{0,I_{NS},\emptyset}\) via trees and as a stratified superorbifold in~\cite{KSY-SQCI}.

The structures, upon which we focus in this study, are partly motivated by the study of \enquote{phylogenetic trees} in~\cite{WY-CTPSMSC}.
Phylogenetic trees are used to model evolution and are mathematically described as trees where edges have a length.
Y. Wu and S.-T. Yau describe combinatorially various patterns of degeneration of generically smooth projective curves of genus zero with marked points at the boundaries of moduli spaces of such curves using phylogenetic trees where the length of edges may decrease to zero.
The results in Section~\ref{Sec:StableSUSYTrees} indicate how moduli spaces of phylogenetic trees could be generalized to moduli spaces of \enquote{phylogenetic SUSY trees}.

The paper is organized as follows:
We start the Section~\ref{Sec:SUSYGraphs} with a systematic description of the formalism of graphs as a language for describing the combinatorics of degeneration of various types of algebraic/analytic curves and SUSY curves.
The Section~\ref{Sec:StableSUSYCurvesWithPuncturesAndTheirDualGraph} recalls the notions of stable SUSY curve with Neveu--Schwarz and Ramond punctures from~\cite{FKP-MSSCCLB} and describes how a stable SUSY graph is obtained from stable SUSY curves.
Finally, Section~\ref{Sec:SupermodularOperads} contains the main new results of this paper, the construction of the operad of moduli superstacks of SUSY curves, its symmetry properties and relationship to the operad of moduli stacks of curves.

\subsection*{Acknowledgments}
We thank Katherine Maxwell, Vincentas Mulevičius, Lukas Müller and Luuk Stehouwer as well as the members of the Seminar for Mathematics of Quantum Field Theory at Max-Planck-Institut für Mathematik for useful comments and discussions.
Enno and Yuri are deeply grateful to Marianne and Xenia for all their love.

\section{SUSY Graphs}\label{Sec:SUSYGraphs}

\subsection{Graphs, their morphisms and categories}
We accept the general framework of~\cite{BM-GOIC}, stressing the difference between the formal definitions of \emph{graphs} on the one side, and their \emph{geometric realizations} on the other side.

A \emph{graph} $\tau$ is a family of structures (just sets or structured sets and maps between them in the simplest cases) $(F_{\tau}, V_{\tau}, \partial_{\tau} , j_{\tau})$.
Elements of $F_{\tau}$, resp. $V_{\tau}$, are called \emph{flags}, resp. \emph{vertices} of $\tau$.
The map $\partial_{\tau}\colon F_{\tau} \to V_{\tau}$ associates to each flag a vertex, called its \emph{boundary}.
For a given vertex \(v\in V_\tau\) we denote by \(F_\tau(v) = {\left(\partial_\tau\right)}^{-1}(v)\subset F_\tau\) the set of flags adjacent to \(v\).

The map $j_{\tau}\colon F_{\tau} \to F_{\tau}$ must satisfy the condition $j_{\tau}^2 = \id$, identical map of $F_{\tau}$ to itself.
Flags fixed by \(j_\tau\) are called tails; we denote the set of tails of \(\tau\) by \(T_\tau\).
Two-element orbits of \(j_\tau\) are called edges of \(\tau\); we denote the set of edges of \(\tau\) by \(E_\tau\).

For a discussion of their geometric meaning, and of many marginal (\enquote{degenerate}) cases, see~\cite{BM-GOIC}.

A \emph{morphism of graphs} $h\colon \tau \to \sigma $ is a triple of maps
\begin{equation}
	\left(h^F\colon F_{\sigma} \to F_{\tau},
\quad  h_V\colon V_{\tau} \to V_{\sigma},
\quad  j_h\colon F_{\tau} \setminus h^F (F_{\sigma}) \to F_{\tau} \setminus h^F (F_{\sigma}) \right),
\end{equation}
where $h^F$ is an injective contravariant map, $h_V$ is a surjective covariant map, and $j_h$ is an involution, satisfying a list of additional restrictions:
cf.~\cite[Definition~1.2.1]{BM-GOIC}.

Thus defined, graphs form a category $\cat{Gr}$, with symmetric monoidal structure corresponding to the disjoint union \(\coprod\) of geometric realizations of graphs.

\begin{rem}
	The definition of \(\cat{Gr}\) in this article is particularly suited for the study of dual graphs of curves and differs from other texts, for example,~\cite{BH-MSNPC}.
	In particular, graphs in \(\cat{Gr}\) can have tails, multiple but undirected edges, loops and disconnected components.
	Graph morphisms allow \emph{graftings}, that is connecting two tails to form an edge, \emph{contractions} of edges and \emph{virtual contractions}, that is contraction of a pair of tails, as well as \emph{mergers} of vertices preserving flags.
	But on the other hand, morphisms cannot break edges into tails and inclusions of subgraphs are not necessarily graph morphisms because the map \(h_V\) between vertices is required to be surjective.
\end{rem}

\begin{rem}
	It is pretty clear, that in the definitions above one may replace (structured) sets by objects of a category, maps of sets by morphisms between objects of this category etc.
	Of course, one then should take care about various compatibility restrictions, lifted to the level of a (small) category.
\end{rem}

\subsection{Labeled graphs}
In what follows we will need graphs with different labelings.
Abstractly, a category of labeled graphs \(\cat{\Gamma}\) comes with a functor \(\cat{\psi\colon \Gamma\to Gr}\) satisfying several properties, see~\cite[1.3 Definition]{BM-GOIC}.
As mentioned there, a labeled graph \(\sigma\in\cat{\Gamma}\) can be imagined as the underlying graph \(\psi(\sigma)\) together with some additional data on vertices, flags or edges.
Some examples:

\begin{ex}[modular graphs, see Example 1.3.2c) in~\cite{BM-GOIC}]\label{ex:ModularGraphs}
	A \emph{modular graph} is a graph \(\tau=(F_\tau, V_\tau, \partial_\tau, j_\tau)\) together with a \emph{genus labeling}, that is, is a map \(g\colon V_\tau\to \Z_{\geq 0}\).
	A morphism of modular graphs is a morphism of graphs such that the genus of a vertex in the image is given by the sum of the genera of the vertices in the preimage plus the number of contracted loops at that vertex.
	Any two preimages of a vertex must be connected by a path of contracted edges; in particular mergers are not allowed.

	The genus of a connected modular graph \(\tau\) is given by
	\begin{equation}
		g_\tau = \sum_{v\in V_\tau} (g(v) - 1) + \#E_\tau + 1,
	\end{equation}
	where \(\#E_\tau\) is the cardinality of the set of edges of \(\tau\).
	The genus of disconnected modular graphs is the sum of the genera of its connected components.

	A \emph{tree} is a genus labeled graph of genus zero.
	We say that the modular graph \(\tau\) is \emph{stable} if for every vertex \(v\in V_\tau\) it holds \(2g(v)-2 + \#F_\tau(v)>0\).
\end{ex}

\begin{ex}[colored graphs, see Example 1.3.2.d) in~\cite{BM-GOIC}]\label{ex:ColoredGraphs}
	A \emph{colored graph} is a graph where edges and tails are assigned a color.
	More precisely, an \(I\)-coloring on \(\tau=(F_\tau, V_\tau, \partial_\tau, j_\tau)\) is a map \(c\colon F_\tau\to I\) to some fixed-finite set \(I\) whose elements are called colors such that \(c\circ j_\tau = c\).
	Morphisms \(h\colon \tau \to \sigma\) of \(I\)-colored graphs preserve the color of edges, that is \(c_\tau\circ h^F = c_\sigma\) and \(c_\tau\circ j_h = c_\tau|_{F_\tau\setminus h^F(F_\sigma)}\).
\end{ex}

\begin{ex}[graphs with labeled tails]\label{ex:ILabeledGraph}
	Let \(I\) be a finite set.
	An \emph{\(I\)-labeling of the tails} on a graph \(\tau\) is given by a bijective map \(l_\tau\colon I\to T_\tau\).
	A morphism \(h\colon \tau\to \sigma\) between \(I\)-labeled graphs is bijective on tails and preserves the labeling, that is, \(l_\tau\circ h^F|_{T_\sigma} = l_\sigma\).

	Any two \(I\)-labelings \(l_\tau\) and \(l'_\tau\) on \(\tau\) differ by an element of the group \(\Aut I\) of bijections \(I\to I\).
	Any bijection \(I\to J\) between finite sets yields a functor from \(I\)-labeled to \(J\)-labeled graphs.

	It is often convenient to label by the finite set \(I=\Set{1,\dotsc, k}\) for some positive integer \(k\), in which case one speaks about a \(k\)-labeling.
	The tails of a \(k\)-labeled graph are numbered and two \(k\)-labeled graphs differ by an element of the symmetric group in \(k\)-elements.
	Every finite set \(I\) of cardinality \(\#I=k\) is non-canonically isomorphic to \(\Set{1,\dotsc, k}\) but in this article we need to keep track of isomorphisms of finite sets explicitly.
	See also Remark~\ref{rmk:kLabeling} below.
\end{ex}

\begin{defn}
	Let \(\cat{MGr}\) be the category of modular graphs and morphisms compatible with the genus labeling in the sense of Example~\ref{ex:ModularGraphs}.
	We also consider the full subcategories of stable graphs~\(\cat{MGr^{st}}\), modular trees~\(\cat{MTr}\) and stable modular trees~\(\cat{MTr^{st}}\).

	Furthermore, we denote by \(\cat{MGr}^{\cat{st}}_{g,I}\) the category of stable modular graphs of genus \(g\) with \(I\)-labeled tails.
	Morphisms in \(\cat{MGr}^{\cat{st}}_{g,I}\) respect the genus labeling and the labeling of tails in the sense of Examples~\ref{ex:ModularGraphs} and~\ref{ex:ILabeledGraph}.
\end{defn}
The categories \(\cat{MGr}\), \(\cat{MGr^{st}}\), \(\cat{MTr}\) and \(\cat{MTr^{st}}\) are symmetric monoidal categories with respect to disjoint union.

\subsection{Graphs relevant to the encoding of SUSY curves}\label{SSec:GraphsRelevantToTheEncodingOfSUSYCurves}
\begin{defn}\label{defn:SUSYGraph}
	A \emph{SUSY graph} is a graph $\tau=(F_{\tau},  V_{\tau}, \partial_{\tau}, j_{\tau})$, endowed with
	\begin{enumerate}
		\item\label{item:defnSUSYGraph:GenusLabeling}
			a genus labeling $g_{\tau}\colon V_{\tau} \to \Z_{\ge 0}$ and
		\item\label{item:defnSUSYGraph:Coloring}
			a coloring \(c_\tau\colon F_\tau\to \Set{NS, R}\) such that for every vertex \(v\in V_\tau\) the number of adjacent flags with color R must be even.
	\end{enumerate}
	A morphism of SUSY graphs is a morphism between graphs that preserves the genus labeling as well as the coloring of flags, compare Examples~\ref{ex:ModularGraphs},~\ref{ex:ColoredGraphs}.
	We denote the category of SUSY graphs by \(\cat{SGr}\).

	We denote the full subcategory of \(\cat{SGr}\) where objects are stable SUSY graphs by \(\cat{SGr^{st}}\).
	The full subcategory of \(\cat{SGr}\) where objects are trees is denoted by \(\cat{STr}\) and the category of stable SUSY trees by \(\cat{STr^{st}}\).
\end{defn}

The coloring \(c_\tau\) induces partitions of the set of all flags \(F_\tau = F_{\tau, NS} \cup F_{\tau, R}\) of a SUSY graph, the set of flags adjacent to a vertex \(F_{\tau}(v) = F_{\tau, NS}(v) \cup F_{\tau, R}(v)\), tails \(T_\tau = T_{\tau, NS}\cup T_{\tau, R}\) and edges \(E_\tau = E_{\tau, NS}\cup E_{\tau, R}\) into those with color \(NS\) and those with color \(R\).
We denote elements of \(F_{\tau, NS}\) Neveu--Schwarz flags and elements of \(F_{\tau, R}\) Ramond flags which we sometimes abbreviate as NS-flags and R-flags respectively.
Similarly for edges and tails.

Notice that the number of Ramond tails of a SUSY graph is always even.
The number of Neveu--Schwarz tails (resp.\ Ramond tails) of the image of a morphisms of SUSY graphs may be lower than the number of Neveu--Schwarz tails (resp.\ Ramond tails) of the domain.
Morphisms between SUSY graphs can graft tails of the same color and virtually contract pairs of tails of the same color.

There is a forgetful functor \(\cat{F}\colon \cat{SGr}\to \cat{MGr}\) that restricts to forgetful functors \(\cat{SGr^{st}}\to\cat{MGr^{st}}\), \(\cat{STr}\to \cat{MTr}\) and \(\cat{STr^{st}}\to\cat{MTr^{st}}\).
There is also a functor \(\cat{I}\colon \cat{MGr}\to \cat{SGr}\) sending a modular graphs to the same graph where every flag is of color~\(NS\).
The functor \(\cat{I}\) embeds \(\cat{MGr}\) as a full subcategory in \(\cat{SGr}\) and satisfies \(\cat{F}\circ \cat{I}=\cat{id_{MGr}}\).
The restrictions of \(\cat{I}\) to \(\cat{MGr^{st}}\to \cat{SGr^{st}}\), \(\cat{MTr}\to \cat{STr}\) and \(\cat{MTr^{st}}\to \cat{STr^{st}}\) are also full subcategories.

The categories \(\cat{SGr}\), \(\cat{SGr^{st}}\), \(\cat{STr}\) and \(\cat{STr^{st}}\) are symmetric monoidal categories with respect to the disjoint union \(\coprod\) of SUSY graphs.
	Both the inclusion functors \(\cat{I}\) and the forgetful functors \(\cat{F}\) are symmetric monoidal functors.

\begin{defn}
	Choose a pair of finite sets \((I_{NS}, I_R)\), such that the cardinality of \(I_R\) is even.
	An \emph{$(I_{NS},I_R)$-labeled SUSY} graph is a SUSY graph \((\tau, g_\tau, c_\tau)\) together with two separate labelings of NS-tails and  R-tails, that is, bijections
	\begin{align}
		l_{\tau , NS}\colon I_{NS} &\to T_{\tau , NS}, &
		l_{\tau , R}\colon I_R &\to T_{\tau , R}.
	\end{align}
	A morphism of \((I_{NS}, I_R)\)-labeled SUSY graphs is a morphism \(h\colon \tau\to \sigma\) of SUSY graphs that is bijective on Neveu--Schwarz tails and Ramond tails and preserve the labelings, that is \(l_{\tau, NS}\circ h^F|_{T_{\sigma, NS}} = l_{\sigma, NS}\) and \(l_{\tau, R}\circ h^F|_{T_{\sigma, R}} = l_{\sigma, R}\).

	We denote the category of \((I_{NS}, I_R)\)-labeled stable SUSY graphs of genus \(g\) by \(\cat{SGr}^{\cat{st}}_{g, I_{NS}, I_R}\).
\end{defn}

Any graph \(\tau\) is \((T_{\tau, NS}, T_{\tau, R})\)-labeled.
The forgetful functor \(\cat{F}_{g, I_{NS}, I_R}\colon \cat{SGr}^{\cat{st}}_{g,I_{NS}, I_R}\to \cat{MGr}^{\cat{st}}_{g, I_{NS}\cup I_R}\) and inclusion functor \(\cat{I}_{g, I}\colon \cat{MGr}^{\cat{st}}_{g,I}\to \cat{SGr}^{\cat{st}}_{g, I, \emptyset}\) are constructed in the obvious way.
Here and in the following, we assume that \(I_{NS}\cup I_R\) is a disjoint union, compare also~\cite[Section~1.2.4]{BM-GOIC}.

Any two \((I_{NS}, I_R)\)-labelings \((l_{\tau, NS}, l_{\tau, R})\) and \((l'_{\tau, NS}, l'_{\tau, R})\) on a SUSY graph \(\tau\) differ by a pair of automorphisms of finite sets \(s_{NS}\colon I_{NS}\to I_{NS}\) and \(s_R\colon I_R\to I_R\) such that
\begin{align}
	l'_{\tau, NS} &= l_{\tau, NS} \circ s_{NS}, &
	l'_{\tau, R} &= l_{\tau, R} \circ s_{R}.
\end{align}
The group \(\Aut(I_{NS}, I_R)\) of automorphisms of the pair \((I_{NS}, I_R)\) of finite sets is also the automorphism group of the unlabeled SUSY corolla with Neveu--Schwarz tails \(I_{NS}\) and Ramond tails \(I_R\).
Remember that a corolla is a graph with a single vertex.

More generally, any pair of bijections \((s_{NS}\colon I_{NS}\to J_{NS}, s_R\colon I_R\to J_R)\) yields a functor \(\cat{a}_s\colon \cat{SGr}^{\cat{st}}_{g, I_{NS}, I_R} \to \cat{SGr}^{\cat{st}}_{g, J_{NS}, J_R}\) which is an equivalence of categories.

\subsection{Stable SUSY trees}\label{Sec:StableSUSYTrees}
Recall that stable SUSY trees are SUSY graphs of genus zero such that every vertex has at least three adjacent flags.
The forgetful functor \(\cat{F}\colon \cat{STr^{st}}\to\cat{MTr^{st}}\) sends a stable SUSY tree to a stable modular tree.
Conversely, we will now show that a modular tree can be given the structure of a SUSY graph by coloring the tails.
The color of the edges can be inferred using the parity condition at every vertex.

\begin{lemma}\label{lemma:SUSYLiftOfModularGraphs}
	Let \(\sigma\in \cat{MGr}^{\cat{st}}_{0,I}\) be stable modular tree with \(I\)-labeled tails and \(I=I_{NS}\cup I_R\) a partition such that the cardinality of \(I_R\) is even.
	There exists a unique stable \((I_{NS}, I_R)\)-labeled SUSY tree \(\tau\in\cat{SGr}^{\cat{st}}_{0,I_{NS}, I_R}\) such that \(\cat{F}_{0,I_{NS}, I_R}(\tau)=\sigma\).
\end{lemma}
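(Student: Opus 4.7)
The plan is to keep the underlying graph, vertex set, flag set, boundary map, involution, and genus labeling of \(\sigma\), and to define a coloring \(c_\tau\colon F_\tau\to\Set{NS,R}\) in essentially the only way possible. On tails, the bijection \(l_\sigma\colon I\to T_\sigma\) transports the partition \(I=I_{NS}\cup I_R\) to a coloring of \(T_\sigma\); this also gives the desired labelings \(l_{\tau,NS}\) and \(l_{\tau,R}\). For each edge \(e\in E_\sigma\), use that \(\sigma\) is a tree: removing \(e\) (but remembering its two flags as tails) splits \(\sigma\) into two subtrees, and the numbers of R-tails in them sum to \(\#I_R\), which is even, so they have the same parity. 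Color \(e\) as R if that common parity is odd and as NS otherwise.

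The bulk of the existence claim is to check the parity axiom~\ref{item:defnSUSYGraph:Coloring} at every vertex. Fix \(v\in V_\sigma\), let \(r_v\) be the number of R-tails of \(\sigma\) incident to \(v\), and for each edge \(e\) incident to \(v\) let \(n_e\) be the number of R-tails in the subtree \(T_e\) obtained by deleting \(e\) and discarding the component that contains \(v\). Since every R-tail of \(\sigma\) lies either at \(v\) or in exactly one such \(T_e\),
\begin{equation}
 r_v + \sum_{e\ni v} n_e \;=\; \#I_R \;\equiv\; 0 \pmod 2.
\end{equation}
By construction the number of R-flags incident to \(v\) equals \(r_v + \#\Set{e\ni v \given n_e \text{ odd}} \equiv r_v + \sum_{e\ni v} n_e \equiv 0 \pmod 2\), as required. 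Stability of \(\tau\) follows immediately from stability of \(\sigma\), since the underlying graph and the genus labeling are unchanged; and \(\cat{F}_{0,I_{NS},I_R}(\tau)=\sigma\) by construction.

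For uniqueness, suppose \(\tau'\) is any SUSY lift of \(\sigma\). The tail coloring is forced by the labelings. For an arbitrary edge \(e\), let \(T\) be one of the two subtrees obtained by deleting \(e\), and sum the parity condition over the vertices of \(T\). Each edge internal to \(T\) has both of its flags counted, contributing \(0\) modulo \(2\); the edge \(e\) is counted through its single flag lying in \(T\); and each R-tail of \(\sigma\) in \(T\) is counted once. Hence
\begin{equation}
 [c_{\tau'}(e)=R] \;\equiv\; \#\Set{\text{R-tails of }\sigma\text{ in }T} \pmod 2,
\end{equation}
which is exactly the rule we used to define \(c_\tau\). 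So \(\tau'=\tau\).

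The only subtle point I anticipate is the clean bookkeeping in the summation argument for uniqueness, in particular making sure that edges inside \(T\) contribute \(0\) mod \(2\) while the cut edge \(e\) contributes only once; this is a standard parity/handshake count on trees and is the real content of the lemma. Should \(\cat{MGr}^{\cat{st}}_{0,I}\) allow disconnected objects, the same argument applies componentwise, provided each component carries an even number of R-tails; this can be noted as an implicit hypothesis on the partition.
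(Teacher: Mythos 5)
Your proof is correct, but it takes a genuinely different route from the paper's. The paper argues by induction on the number of edges: it repeatedly picks a vertex incident to exactly one edge, reads off the color of that edge from the parity of the Ramond tails at that vertex, then prunes the vertex and recurses. You instead give a closed-form definition of the edge coloring (an edge is Ramond iff either subtree obtained by cutting it contains an odd number of R-tails, the two parities agreeing because \(\#I_R\) is even) and then verify the vertex parity axiom and uniqueness by two handshake-style double counts. Your approach has the advantage of making existence fully explicit: the paper's induction most naturally establishes that the coloring is \emph{forced}, and is somewhat terse about why the parity condition actually holds at the final remaining vertex, whereas your count \(r_v+\sum_{e\ni v}n_e=\#I_R\equiv 0\) handles every vertex uniformly. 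The paper's induction is shorter and more algorithmic. One small point: your summation arguments use that a genus-zero modular tree has no loops or multiple edges (so that cutting an edge yields exactly two components and the components of \(\sigma\setminus v\) biject with the edges at \(v\)); this follows from \(\#E_\sigma=\#V_\sigma-1\) for a connected genus-zero graph with all \(g(v)=0\), and is worth saying. Your closing caveat about disconnected trees is apt --- the statement implicitly assumes \(\sigma\) connected (as does the paper's proof and the subsequent count of \(2^{\#T_\sigma-1}\) preimages), since otherwise existence requires evenness on each component separately.
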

\begin{proof}
	We need to construct the partition \(c_\tau\) of flags into Neveu--Schwarz and Ramond flags.
	The partition of the tails is induced by the \(I\)-labeling and the partition \(I=I_{NS}\cup I_R\).

	The partition of the edges is uniquely obtained from the partition of the tails together with the parity condition by induction over the number of edges:
	If the tree~\(\tau\) has no edges there is nothing to show.
	Every tree~\(\tau\) with at least one edge has a vertex \(v\) which is the boundary of precisely one edge \(e\).
	Stability implies that \(v\) has at least two tails.
	If the number of Ramond tails bounding to \(v\) is even the edge \(e\) needs to be be a Neveu--Schwarz edge.
	If the number of Ramond tails bounding \(v\) is odd the edge \(e\) needs to be an Ramond edge.
	We can now proceed by inductively considering the tree \(\tau'\) obtained from \(\tau\) by cutting the edge \(e\) and deleting the vertex \(v\) as well as its adjacent flags.
\end{proof}

There are \(\binom{k}{k_R}\) choices of partitions \(I=I_{NS}\cup I_R\) such that \(k=\#I\) as well as \(k_R=\#I_R\) are fixed and \(2^{k-1}=\sum_i\binom{k}{2i}\) choices of partitions \(I=I_{NS}\cup I_R\) such that the cardinality of \(I_R\) is even.
That is, \(\sigma\in\cat{MTr^{st}}\) has \(2^{\#T_\sigma-1}\) preimages under \(\cat{F}\colon \cat{STr^{st}}\to \cat{MGr^{st}}\).

\section{Stable SUSY curves with punctures and their dual graph}\label{Sec:StableSUSYCurvesWithPuncturesAndTheirDualGraph}
In this section we recall the notions of stable SUSY curves with punctures and their respective moduli spaces.
Furthermore we give the construction of the dual graph of a SUSY curve with punctures as a SUSY graph.

The definition of SUSY curves or super Riemann surfaces appeared first in the context of super string theory, see, for example,~\cites{F-NSTTDCFT}.
SUSY curves are superschemes of dimension \(1|1\) with an additional structure and generalize, in many aspects, algebraic curves to supergeometry.
Particular examples of SUSY curves can be constructed from algebraic curves together with a spinor bundle.

Early studies of the moduli spaces of SUSY curves are~\cites{LBR-MSRS}{CR-SRSUTT}.
It was argued in~\cite{D-LaM} that families of SUSY curves may degenerate in SUSY curves with two distinct types of nodes, called Neveu--Schwarz (\(NS\)) nodes and Ramond (\(R\)) nodes.
While Neveu--Schwarz nodes are transversal intersections of SUSY curves, Ramond nodes have an additional degeneration of the spinor bundle.

The role of marked points on purely even algebraic curves is played by \enquote{punctures} on a SUSY curve.
Punctures are likewise divided into two classes, called Neveu--Schwarz and Ramond punctures.
For a family of stable SUSY curves $M \to B$, Neveu--Schwarz punctures are given by sections $B\to M$, whereas Ramond punctures are relative Cartier divisors $\mathcal{R}\subset M$ such that the projection $\mathcal{R}\to B$ is smooth of dimension $0|1$ together with additional structural conditions.

For modern accounts on stable SUSY curves with punctures and their moduli spaces we refer to~\cites{FKP-MSSCCLB}{OV-SMSGZSUSYCRP}{BR-SMSCRP}.

\subsection{SUSY curves with punctures}\label{SSec:SUSYCurvesWithPunctures}
We assume familiarity with algebraic supergeometry as, for example, in~\cite{M-GFTCG}.
The following definition of SUSY curve with punctures is taken from~\cite[Definition~2.3 in Section~2.2]{FKP-MSSCCLB}.
\begin{defn}\label{defn:SUSYCurveWithPunctures}
	A \emph{SUSY curve} over the base superscheme \(B\) with Neveu--Schwarz punctures labeled by the finite set \(I_{NS}\) and Ramond punctures labeled by the finite set~\(I_R\) is a tuple \((M, \{s_i\}, \mathcal{R}, \mathcal{D})\) where
	\begin{itemize}
		\item
			\(\pi\colon M\to B\) is a smooth morphism of superschemes of relative dimension~\(1|1\) generic fibers,
		\item
			\(s_i\colon B\to M\) for \(i\in I_{NS}\) are sections of \(\pi\) such that their reductions are distinct, called Neveu--Schwarz punctures,
		\item
			\(\mathcal{R}\) is an unramified relative effective Cartier divisor of codimension \(0|1\) in \(M\) of degree \(\#I_R\) whose labeled components~\(r_j\), \(j\in I_R\) are called the Ramond punctures
		\item
			the line bundle \(\mathcal{D}\) is a subbundle \(\mathcal{D}\subset{\cT_M}\) of rank \(0|1\) such that the commutator of vector fields induces an isomorphism
			\begin{equation}
				\mathcal{D}\otimes \mathcal{D}\to \left(\faktor{\cT_M}{\mathcal{D}}\right)(-\mathcal{R}).
			\end{equation}
	\end{itemize}
\end{defn}

Considering the quotient of the structure sheaf of \(M\) by the ideal of nilpotent elements one obtains the reduction \(i_{red}\colon M_{red}\to M\).
The family \(M_{red}\to B_{red}\) is a smooth family of curves over \(B_{red}\).
The reduction of the Neveu--Schwarz punctures \(s_i\) yields marked points of \(M_{red}\).
The reduction of the Ramond punctures yields a divisor \(\mathcal{R}_{red}\) which are equivalent to further marked points of \(M_{red}\).
That is, we can see \(M_{red}\to B_{red}\) as a smooth family of curves with marked points labeled by \(I_{NS}\cup I_R\).

But it is also interesting to make a distinction between the Neveu--Schwarz punctures and the Ramond punctures in the reduced case:
The pullback \(S=i^*\mathcal{D}\) is a spinor bundle on \(M_{red}\) with a degeneration over \(\mathcal{R}_{red}\), or more precisely
\begin{equation}
	S\otimes S = \cT_{M_{red}} (-\mathcal{R}_{red})
\label{eq:DegeneratedSpinorBundle}\end{equation}
The family \(M_{red}\to B_{red}\) together with the spinor bundle \(S\) as in Equation~\eqref{eq:DegeneratedSpinorBundle} is called a spin curve with punctures of type \(0\) labeled by \(I_{NS}\) and punctures of type \(1\) labeled by \(I_R\) in~\cite{AJ-MTSC}.

A SUSY curve over \(B= pt\) is equivalent to the data of \((M_{red}, S, \Set{s_{i,red}}, \mathcal{R}_{red})\) satisfying Equation~\eqref{eq:DegeneratedSpinorBundle}.

If \(M\) is irreducible of genus \(g\) the Theorem of Riemann Roch implies
\begin{equation}
	2\deg S
	= \deg \cT_{M_{red}} - \deg \mathcal{R}_{red}
	= 2 - 2g - \deg \mathcal{R}_{red}.
\end{equation}
that is, the number of Ramond punctures needs to be even.

\begin{rem}
	SUSY curves are sometimes also called super Riemann surfaces or super curves, especially if the number of Ramond punctures is zero.
	We will use in this work exclusively the name SUSY curve.
\end{rem}

\subsection{Stable SUSY curves with punctures}
Intuitively, a stable SUSY curve is a generalization of SUSY curves with punctures to include nodes such that its reduced space is a stable curve.
To make this precise, it is necessary to reformulate the non-integrability condition of \(\cD\) as was argued in~\cite{D-LaM}.

Dualizing the exact sequence $0\to \cD \to \cT_{M/B} \to \faktor{\cT_{M/B}}{\cD}=\cD^{\otimes 2}(R) \to 0$, we get $0\to \cD^{-2} (-R) \to \Omega_{M/B} \to \cD^{-1} \to 0$.
This shows that $\cD^{-1}(-R)$ is isomorphic to the Berezinian of $\Omega_{M/B}$, denoted $\omega_{M/B}=\Ber\Omega_{M/B}$, and in turn produces the derivation $\delta\colon \cO_M \to \omega_{M/B}(R)$, trivial on lifts of $\cO_B$.
The data of \(\delta\) is equivalent to the data of the line bundle \(\cD\).

\begin{defn}
	Consider a superscheme $B$.
	A family of \emph{stable SUSY curves with punctures over $B$} is a tuple \((M, \{s_i\}, \mathcal{R}, \delta)\) consisting of
	\begin{itemize}
		\item
			a proper, flat and relatively Cohen--Macaulay superscheme $\pi\colon M\to B$
		\item
			\(s_i\colon B\to M\) for \(i\in I_{NS}\) are sections of \(\pi\) such that their reductions are different, called Neveu--Schwarz punctures,
		\item
			\(\mathcal{R}\) is an unramified relative effective Cartier divisor of codimension \(0|1\) in \(M\) of degree \(\#I_R\) whose labeled components~\(r_j\), \(j\in I_R\) are called the Ramond punctures
		\item
			A derivation $\delta\colon \cO_M \to \omega_{M/B} (\mathcal{R})$, trivial on lifts of $\cO_B$ to $M$.
	\end{itemize}
	These data must satisfy the following conditions:
	\begin{enumerate}
		\item
			$M$ contains an open fiberwise dense subset $U$, such that $U/B$ is smooth of relative dimension $1|1$, and all $s_i$ and $r_i$ are contained in $U$.
		\item
			The tuple \((U, \{s_i\}, \mathcal{R}, \delta)\) is equivalent to a SUSY curve in the sense of Definition~\ref{defn:SUSYCurveWithPunctures}.
		\item
			The reduction \(M_{red}\to B_{red}\) is a stable family of marked curves.
	\end{enumerate}
\end{defn}

By the local theory of the nodes of stable SUSY curves developed in~\cite{D-LaM}, we know that there is a proper SUSY curve \(\tilde{M}\) with Neveu--Schwarz and Ramond punctures together with a map \(\pi\colon \tilde{M}\to M\) of stable SUSY curves such that
\begin{itemize}
	\item
		\(\pi|_{\pi^{-1}(U)}\) is an is an isomorphism of SUSY curves,
	\item
		\(\pi_{red}\colon \tilde{M}_{red}\to M_{red}\) is the normalization,
	\item
		and the preimage of a node consists of two marked points of the same type.
\end{itemize}

Notice that, in general, \(M\) and \(\tilde{M}\) can be reducible and consist of several components~\(M_i\) and \(\tilde{M}_i\) respectively.
We call the punctures of \(\tilde{M}_i\) special points of \(M_i\).
A special point of \(M_i\) is either a puncture of \(M\) or represents a node of \(M\).
Every special point is either a Neveu--Schwarz special point or a Ramond special point.
The number of Ramond special points on an irreducible component is even.

The reduction \({M}_{red}\) of a stable SUSY curve \(M\) is a stable algebraic curve of genus~\(g\) with marked points labeled by \(I_{NS}\cup I_R\).
The pullback \(S=i_{red}^*\cD\) of \(\cD\) along the reduction map \(i_{red}\colon M_{red}\to M\) is a locally free sheaf of rank one with a morphism
\begin{equation}\label{eq:TwistedSpinorBundle}
	S\otimes S\to \cT_{M_{red}}\left(-\mathcal{R}_{red}\right)
\end{equation}
which is an isomorphism away from the nodes.

\subsection{The dual graph of a stable SUSY curve with punctures}
To a given algebraic curve with marked points one associates a dual graph, see~\cites{KM-GWCQCEG}[Chapter~X, §2]{ACG-GACII}{CM-SGZMO}.
We are going to generalize the concept of dual graph to SUSY curves by adding additional labels to edges and tails as follows:

\begin{defn}\label{defn:DualGraph}
	Let \((M, \{s_i\}, \mathcal{R}, \mathcal{D})\) be a stable SUSY curve of genus \(g\) with Neveu--Schwarz punctures labeled by \(I_{NS}\) and Ramond punctures labeled by \(I_R\) over \(B\) such that \(B_{red}=pt\).
	The \emph{dual graph} of \((M, \{s_i\}, \mathcal{R}, \mathcal{D})\) consists of a \((I_{NS}, I_R)\)-labeled SUSY graph \(\tau\in \cat{SGr}^{\cat{st}}_{g,I_{NS},I_R}\) where
	\begin{enumerate}
		\item\label{item:defnDualGraph:Vertices}
			The set of vertices \(V_\tau\) is the set of irreducible components of \(M\).
		\item\label{item:defnDualGraph:Flags}
			The vertex \(v\in V_\tau\) representing the irreducible component \(M_i\) has an adjacent flag for each special point of \(M_i\).
			This determines \(F_\tau\) and \(\partial_\tau\).
		\item\label{item:defnDualGraph:Edges}
			The map \(j_\tau\) maps flags associated to punctures to itself and flags associated to nodes to the flag that represents the same node on the intersecting irreducible component.
		\item\label{item:defnDualGraph:Genus}
			The genus labeling sends the vertex \(v\) representing the irreducible component \(M_i\) to the genus of \(M_i\).
			The stability of the SUSY curve implies that \(\tau\) is a stable graph.
		\item\label{item:defnDualGraph:Coloring}
			The flags~\(F_\tau\) are partitioned into Neveu--Schwarz flags~\(F_{\tau,NS}\) and Ramond flags~\(F_{\tau, R}\) according to the special point they represent.
			As the number of Ramond special points on each irreducible component is even, we have that for every vertex \(v\) the number \(\#F_{\tau, R}(v)\) of Ramond flags at \(v\) is even.
		\item\label{item:defnDualGraph:Punctures}
			The labeling \(l_{\tau, NS}\) (resp. \(l_{\tau, R}\)) sends \(i\in I_{NS}\) (resp. \(j\in I_R)\) to the tail corresponding to the Neveu--Schwarz puncture with label \(i\) (resp. Ramond puncture with label \(j\)).
	\end{enumerate}
\end{defn}

Let \(b\colon B'\to B\) be a map between superschemes such that \(B_{red}=B'_{red}=pt\).
Then the stable SUSY curve obtained from \((M, \{s_i\}, \mathcal{R}, \mathcal{D})\) via base change along \(b\) has the same dual graph because the number of irreducible components, their intersection behavior and labelings of punctures are invariant under base change.
In particular, we can assume without loss of generality that we consider the  dual graph of a SUSY curve over \(B=pt\), that is equivalently, a spin curve.

\begin{rem}
	The dual graph of a SUSY curve of genus zero is a SUSY tree.
	More generally, the genus of a stable SUSY curve and its dual graph coincide.
\end{rem}

The Definition~\ref{defn:DualGraph} of dual graphs of SUSY curves is compatible with the definition of dual graphs of algebraic nodal curves.
Indeed, the definition of dual graphs of algebraic nodal curves coincides with the points~\ref{item:defnDualGraph:Vertices}--\ref{item:defnDualGraph:Genus} of Definition~\ref{defn:DualGraph} and has a simplified version of~\ref{item:defnDualGraph:Punctures}.
This yields:
\begin{prop}
	Let \(M\) be a stable SUSY curve of genus \(g\) with Neveu--Schwarz punctures labeled by \(I_{NS}\) and Ramond punctures labeled by \(I_R\).
	Its reduction \(M_{red}\) is a stable curve of genus \(g\) with marked points obtained from the reduction of Neveu--Schwarz and Ramond punctures and labeled by \(I_{NS}\cup I_R\).
	The dual graph \(\tau\in\cat{SGr}^{\cat{st}}_{g,I_{NS},I_R}\) of \(M\) and the dual graph \(\sigma\in\cat{MGr}^{\cat{st}}_{g,I_{NS}\cup I_R}\) of \(M_{red}\) satisfy \(\sigma=\cat{F}_{g,I_{NS},I_R}(\tau)\).
\end{prop}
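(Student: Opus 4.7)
The plan is to prove this by directly unfolding both sides and checking that the forgetful functor $\cat{F}_{g, I_{NS}, I_R}$ matches them structure by structure. Recall that, by construction in Section~\ref{SSec:GraphsRelevantToTheEncodingOfSUSYCurves}, $\cat{F}_{g, I_{NS}, I_R}$ keeps the underlying graph $(F_\tau, V_\tau, \partial_\tau, j_\tau)$ and the genus labeling $g_\tau$ unchanged, discards the coloring $c_\tau$, and merges the two bijections $l_{\tau, NS}\colon I_{NS}\to T_{\tau, NS}$ and $l_{\tau, R}\colon I_R\to T_{\tau, R}$ into a single bijection $l_\sigma\colon I_{NS}\cup I_R \to T_\sigma$ via the disjoint union $T_\sigma = T_{\tau, NS}\sqcup T_{\tau, R}$. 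So the task reduces to verifying that each combinatorial datum of the classical dual graph of $M_{red}$ agrees with its SUSY counterpart after forgetting colors.

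First, I would use that the reduction functor $M\mapsto M_{red}$ induces a bijection between irreducible components of $M$ and of $M_{red}$, which matches $V_\tau$ with $V_\sigma$ via item~\ref{item:defnDualGraph:Vertices} of Definition~\ref{defn:DualGraph} applied to both sides. Next, since the normalization $\tilde M\to M$ recalled in Section~\ref{Sec:StableSUSYCurvesWithPuncturesAndTheirDualGraph} reduces to the classical normalization of $M_{red}$ (preimage of a node consisting of two points of the same type), the special points of an irreducible component $M_i$ are in bijection with the special points of $(M_i)_{red}$, giving matching flag sets $F_\tau = F_\sigma$ together with identical boundary maps $\partial_\tau = \partial_\sigma$. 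The involutions $j_\tau$ and $j_\sigma$ also coincide: both fix the flags coming from punctures and pair up the two flags arising from each node, regardless of whether the node is of NS or R type.

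For the genus labeling, the genus of an irreducible component of $M$ is by definition the genus of its reduction $(M_i)_{red}$, so items~\ref{item:defnDualGraph:Genus} and its classical analogue produce the same function $V_\tau\to \Z_{\geq 0}$. Finally, the labeling of tails matches by direct inspection of item~\ref{item:defnDualGraph:Punctures}: the union of $l_{\tau, NS}$ and $l_{\tau, R}$ sends $i\in I_{NS}\cup I_R$ to the tail representing the puncture with label $i$ (whether its reduction comes from a section $s_i$ or a component $r_j$ of $\mathcal R$), which is precisely the classical tail labeling of $M_{red}$. Genus $g$ of $\sigma$ as a modular graph then agrees with the genus of $\tau$ as a SUSY graph by the formula in Example~\ref{ex:ModularGraphs}, since $\cat{F}_{g, I_{NS}, I_R}$ preserves the number of edges and vertex genera, and both dual graphs inherit the genus $g$ of $M$ and $M_{red}$ respectively.

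There is no genuine obstacle; the statement is essentially a bookkeeping result that makes rigorous the remark preceding it that Definition~\ref{defn:DualGraph} coincides with the classical definition on items~\ref{item:defnDualGraph:Vertices}--\ref{item:defnDualGraph:Genus} and refines only item~\ref{item:defnDualGraph:Punctures} by splitting the single tail labeling into two. The one point worth stating explicitly is that the compatibility of reduction with normalization (and hence with nodes) is exactly what guarantees that forgetting the NS/R coloring of edges gives the classical edge set, so I would cite the local analysis of nodes from~\cite{D-LaM} recalled above as the underlying geometric input.
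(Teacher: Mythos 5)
Your proof is correct and takes essentially the same route as the paper, which presents the proposition as an immediate consequence of the remark that points~\ref{item:defnDualGraph:Vertices}--\ref{item:defnDualGraph:Genus} of Definition~\ref{defn:DualGraph} reproduce the classical dual-graph construction and point~\ref{item:defnDualGraph:Punctures} merely splits the single tail labeling in two; no further argument is given there. Your explicit bookkeeping (reduction preserving irreducible components, the normalization of \(M\) reducing to that of \(M_{red}\), and the merging of the two tail labelings into the classical labeling by \(I_{NS}\cup I_R\)) is exactly what the paper leaves implicit.
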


\section{SUSY operads}\label{Sec:SupermodularOperads}
In this section, we give a method to construct SUSY operads from simple building blocks and use that method to construct the SUSY operad of moduli superstacks.
Recall from~\cite{BM-GOIC} that a generalized operad is a monoidal functor \(\cat{\Gamma}\to \cat{C}\) from a category of graphs \(\cat{\Gamma}\) to a \enquote{ground category} \(\cat{C}\) that maps graftings to isomorphisms.
All operads in this section are generalized operads in the sense of~\cite{BM-GOIC}.

The SUSY operad we will construct is a supergeometric generalization of the modular operad
\begin{equation}
	\begin{split}
		\cat{o}\colon \cat{MGr^{st}}&\to \cat{M} \\
		\sigma &\mapsto \prod_{v \in V_\sigma} \overline{M}_{g_\sigma(v), F_\sigma(v)}
	\end{split}
\end{equation}
that sends stable modular graphs to products of moduli stacks \(\overline{M}_{g_\sigma(v), F_\sigma(v)}\) of stable algebraic curves with genus \(g_\sigma(v)\) and marked points labeled by \(F_\sigma(v)\).
Central to the proof of functoriality of \(\cat{o}\) is that contractions of loops and edges are mapped to gluing maps
\begin{align}
	gl^l(i, i')\colon \overline{M}_{g, I}&\to \overline{M}_{g+1, I\setminus\Set{i,i'}}, &
	gl(i, i')\colon \overline{M}_{g, I}\times \overline{M}_{g', I'} &\to \overline{M}_{g+g', \left(I\cup I'\right)\setminus \Set{i,i'}}.
\end{align}
together with commutativity relations for successive gluings and relabelings.
The operad~\(\cat{o}\) is known to encode interesting data about the geometry and topology of the compact moduli stack \(\overline{M}_{g, I}\).
For further information about the modular operads, see~\cites{GK-MO}{M-FMQCMS}{CM-SGZMO}{CMM-QO}.

The first step in the construction of the SUSY operad \(\cat{O}\colon \cat{SGr^{st}}\to \cat{SM}\) generalizing \(\cat{o}\) is to show that SUSY operads can be constructed from similar building blocks.
That is, the operad \(\cat{O}\) is determined by its value on corollas, automorphisms of corollas and the contractions of loops and edges of either Neveu--Schwarz or Ramond type.
A second challenge is the construction of a suitable target category \(\cat{SM}\) as stable SUSY curves cannot be uniquely glued along Ramond punctures and morphisms between their moduli spaces do not necessarily project onto morphisms between moduli spaces of classical stable curves.

In Section~\ref{SSec:OperadsOnSUSYGraphs} we demonstrate that SUSY operads in any category can be constructed by specifying them on corollas and certain simple maps.
In Section~\ref{SSec:ModuliSpacesOfStableSUSYCurves} we recall the necessary background on moduli stacks of stable SUSY curves with punctures and gluings.
The SUSY operad \(\cat{O}\colon \cat{SGr^{st}}\to \cat{SM}\) and the target category \(\cat{SM}\) is then constructed in Section~\ref{SSec:SUSYOperadInModuliStacks}

\subsection{Operads on SUSY graphs}\label{SSec:OperadsOnSUSYGraphs}
In this section we prove a proposition that allows to construct operads \(\cat{O}\colon \cat{SGr^{st}}\to \cat{C}\) by specifying the image of corollas, isomorphisms of corollas and contractions of single edges together with certain compatibility conditions.
The resulting description is similar to the definition of modular operads in~\cite{GK-MO}.
The strategy is to break graphs into corollas and morphisms of graphs into automorphisms of corollas, contractions of single edges as well as graftings.

We recall the notions of total grafting and atomization from~\cite{BM-GOIC} that exist by assumption for every category of labeled graphs.
Given a SUSY graph \(\sigma\), the \emph{total grafting} is the unique morphism in \(\cat{SGr^{st}}\)
\begin{equation}\label{eq:TotalGrafting}
	\circ_{\sigma}\colon \coprod_{v\in V_\sigma} \sigma_v\to \sigma
\end{equation}
which is bijective on vertices as well as flags and grafts all edges.
The graphs \(\sigma_v\) are the corollas corresponding to the vertices \(v\) with their bounding flags.

Given a morphism \(h\colon \tau\to \sigma\) in the category \(\cat{SGr^{st}}\) the \emph{atomization} is a commutative diagram of the form
\begin{diag}\label{diag:Atomization}
	\matrix[mat](m){
		\coprod_{v\in V_\sigma} \tau_v & \coprod_{v\in V_\sigma} \sigma_v \\
		\tau & \sigma \\
	} ;
	\path[pf]{
		(m-1-1) edge node[auto]{\(\coprod h_v\)} (m-1-2)
			edge node[auto]{\(n\)} (m-2-1)
		(m-1-2) edge node[auto]{\(\circ_{\sigma}\)}(m-2-2)
		(m-2-1) edge node[auto]{\(h\)}(m-2-2)
	};
\end{diag}
in \(\cat{SGr^{st}}\) where
\begin{itemize}
	\item
		the graphs \(\tau_v\) are defined by
		\begin{align}
			V_{\tau_v} &= \Set{w\in V_\tau \given h_V(w)=v} &
			F_{\tau_v} &= \Set{f\in F_\tau \given h_V(\partial_\tau f)=v} \\
			\partial_{\tau_v} &= \partial_\tau|_{V_{\tau_v}} &
			j_{\tau_v} &= j_\tau|_{\tau_v}
		\end{align}
	\item
		the morphism \(h_v\colon \tau_v\to \sigma_v\) is given by
		\begin{align}
			h_{v,V} &= h_V|_{V_{\tau_v}} &
			h_v^F &= h^F|_{F_{\tau_v}} &
			j_{h_v} &= j_h|_{F_{\tau_v}}
		\end{align}
		and
	\item
		the morphism \(n\) is given by the grafting of the \(\tau_v\).
\end{itemize}
Both total grafting and atomization are unique up to unique isomorphism.

The maps \(h_v\colon \tau_v\to \sigma_v\) are maps to a corolla and can consequently be decomposed into
\begin{diag}\label{diag:DecompositionMapToCorolla}
	\matrix[seq](m){
		\tau_v & \tau_v^1 & \tau_v^2 & \tau_v^3 & \dots & \tau_v^{l_v} & \sigma_v\\
	} ;
	\path[pf]{
		(m-1-1) edge node[auto]{\(gr_{\tau_v}\)} (m-1-2)
			edge[bend right=30] node[auto]{\(h_v\)} (m-1-7)
		(m-1-2) edge node[auto]{\(h_v^1\)} (m-1-3)
		(m-1-3) edge node[auto]{\(h_v^2\)} (m-1-4)
		(m-1-4) edge (m-1-5)
		(m-1-5) edge (m-1-6)
		(m-1-6) edge node[auto]{\(h_v^{l_v}\)} (m-1-7)
	};
\end{diag}
where \(gr_{\tau_v}\) is a grafting and each \(h_v^l\) is a contraction of a single edge of \(\tau_v\) or an isomorphism.
The atomization of a contraction of a single edge consists of the identity maps for all but one vertex of the target.
The vertex which is not the image of the identity is either the image of a contraction of a single Neveu--Schwarz resp.\ Ramond edge connecting two vertices or the image of the contraction of a single Neveu--Schwarz resp.\ Ramond loop at a single vertex.
The atomization of an isomorphism is a grafting of isomorphisms of corollas.

Of course the decomposition~\eqref{diag:DecompositionMapToCorolla} is not unique but one can prove the following two Lemmata using the atomization:
\begin{lemma}\label{lemma:CommuteIsoContraction}
	Let \(\tau\) be a SUSY graph, and \(a\colon \tau\to \sigma\) an isomorphism and \((f, f')\) a pair of flags of \(\sigma\) that forms an edge.
	Denote by
	\begin{align}
		con_\sigma(f, f')\colon \sigma &\to \sigma_{(f, f')} &
		con_\tau(a^F(f), a^F(f'))\colon \tau &\to \tau_{(a^F(f), a^F(f'))}
	\end{align}
	the maps that contract the edge \((f, f')\) and \((a^F(f), a^F(f'))\) respectively.
	Then there exists a unique isomorphism \(a_{(f, f')}\colon \tau_{(a^F(f), a^F(f'))} \to \sigma_{(f, f')}\) such that
	\begin{equation}
		con_\sigma(f,f')\circ a = a_{(f, f')}\circ con_\tau(a^F(f), a^F(f')).
	\end{equation}
\end{lemma}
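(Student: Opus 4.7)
The plan is to construct $a_{(f,f')}$ explicitly by transporting the three pieces of morphism data $(a^F, a_V, j_a)$ through the two contractions and checking that the resulting triple is well-defined, is a SUSY-graph morphism, and makes the square commute. Uniqueness will then be a formal consequence of the fact that contractions are bijective on flags away from the contracted pair.

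First I would unpack the data. Since $a$ is an isomorphism in $\cat{SGr^{st}}$, the map $a^F\colon F_\sigma\to F_\tau$ is a color-preserving bijection commuting with $j_\sigma, j_\tau$, the map $a_V\colon V_\tau\to V_\sigma$ is a genus-preserving bijection, and $j_a$ is vacuous. For the contractions, $con_\sigma(f,f')^F$ is the inclusion $F_\sigma\setminus\{f,f'\}\hookrightarrow F_\sigma$; $con_\sigma(f,f')_V$ is the quotient map $V_\sigma\twoheadrightarrow V_\sigma/{\sim}$ identifying $\partial_\sigma(f)$ with $\partial_\sigma(f')$; and $j_{con_\sigma(f,f')}$ is the swap on $\{f,f'\}$. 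The description for $con_\tau(a^F(f),a^F(f'))$ is analogous, and because $a^F$ is a color-preserving bijection commuting with $\partial$ and $j$, the pair $(a^F(f),a^F(f'))$ is again an edge of the same color as $(f,f')$.

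Next I would construct $a_{(f,f')}$ componentwise. For the flag map, take
\begin{equation}
a_{(f,f')}^F \;=\; a^F\big|_{F_\sigma\setminus\{f,f'\}}\colon F_{\sigma_{(f,f')}}\longrightarrow F_{\tau_{(a^F(f),a^F(f'))}},
\end{equation}
which is a bijection because $a^F$ is, and maps the complement of $\{f,f'\}$ to the complement of $\{a^F(f),a^F(f')\}$. For the vertex map, observe that $a_V\colon V_\tau\to V_\sigma$ sends $\{\partial_\tau(a^F(f)),\partial_\tau(a^F(f'))\}$ onto $\{\partial_\sigma(f),\partial_\sigma(f')\}$ (since $a^F$ intertwines the boundary maps), so $a_V$ descends uniquely along the two quotients to a bijection
\begin{equation}
a_{(f,f'),V}\colon V_{\tau_{(a^F(f),a^F(f'))}}\longrightarrow V_{\sigma_{(f,f')}}.
\end{equation}
Set $j_{a_{(f,f')}}$ to be empty, which is forced since $a_{(f,f')}^F$ is surjective. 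That this triple preserves the genus labeling, the $\{NS,R\}$-coloring and the involution follows immediately from the corresponding properties of $a$, so $a_{(f,f')}$ is a morphism in $\cat{SGr^{st}}$, and it is an isomorphism by componentwise bijectivity.

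Finally, the commutativity $con_\sigma(f,f')\circ a = a_{(f,f')}\circ con_\tau(a^F(f),a^F(f'))$ is verified separately on flags, vertices and the involution: on flags both sides equal $a^F$ restricted to $F_\sigma\setminus\{f,f'\}$; on vertices both sides agree by the very universal property used to define $a_{(f,f'),V}$; on the involution, both sides restrict to the swap on $\{f,f'\}$ under the identification provided by $a^F$. Uniqueness of $a_{(f,f')}$ follows because this equation determines $b^F$ on $F_{\sigma_{(f,f')}}$ (as $a^F$ is invertible and the inclusions are fixed), determines $b_V$ after passing to the quotient (as $con_\tau$ is surjective on vertices), and forces the auxiliary involution to be empty.

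The main obstacle is purely bookkeeping: variances of $h^F$ versus $h_V$, the empty-set conventions for $j_h$ of a surjective-on-flags morphism, and checking that the quotient of $a_V$ is well-defined. Once the conventions from \cite{BM-GOIC} are fixed, every clause of the proof is forced, which is why the lemma is essentially a naturality statement for contraction with respect to isomorphisms.
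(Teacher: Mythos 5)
Your proposal is correct, and it is in fact more self-contained than the paper's own argument, but it takes a different route. The paper does not construct $a_{(f,f')}$ globally: it first invokes the atomization machinery (diagram~\eqref{diag:Atomization}) to observe that an isomorphism is a grafting of isomorphisms of corollas and that a single-edge contraction is the identity on all but one target vertex, thereby reducing the lemma to two local cases --- $\sigma$ a one-vertex graph with $con_\sigma(f,f')$ contracting its only loop, or $\sigma$ a two-vertex graph with $con_\sigma(f,f')$ contracting the only connecting edge --- and then asserts that $a_{(f,f')}$ can be built explicitly in each. You instead write down the isomorphism directly on the whole graph, via $a^F_{(f,f')}=a^F|_{F_\sigma\setminus\{f,f'\}}$, the descent of $a_V$ along the vertex quotients, and the empty $j$, and you verify commutativity and uniqueness componentwise; your uniqueness argument (injectivity of $con_\tau^F$ and surjectivity of $con_{\tau,V}$ force the components of any candidate) is exactly the right one and is only implicit in the paper. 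What the paper's reduction buys, and what your proof does not provide, is the precise correspondence between the two local cases and conditions~\ref{item:OperadConstruction:CommutationIsoLoop} and~\ref{item:OperadConstruction:CommutationIsoEdge} of Proposition~\ref{prop:OperadConstruction}, which is how the lemma is actually consumed later; what your proof buys is a complete, explicit construction where the paper leaves the final step to the reader. Both arguments are sound for the lemma as stated.
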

\begin{proof}
	The atomization of an isomorphism implies that it is obtained as a grafting of isomorphisms of corollas.
	The atomization of a contraction of a single edge implies that all except one of the maps \(h_v\) in the atomization diagram are identities.
	Hence it remains to consider the two special cases where either
	\begin{itemize}
		\item
			\(\sigma\) consists of a single vertex and \(con_\sigma(f,f')\) is the contraction of the only loop, or
		\item
			\(\sigma\) is a graph with two vertices and \(con_\sigma(f,f')\) is the contraction of the only edge between the two vertices.
	\end{itemize}
	In both cases one can construct the isomorphism \(a_{(f,f')}\) explicitly.
\end{proof}
\begin{lemma}\label{lemma:CommuteContractions}
	Let \(\tau\) be a SUSY graph and \((f_1, f'_1)\) and \((f_2,f'_2)\) two pairs of flags that form edges.
	Let \(con_\tau(f_i, f'_i)\colon \tau\to \tau_{(f_i, f'_i)}\), \(i=1, 2\) be the contraction of the edge \((f_i, f'_i)\) and
	\begin{align}
		con_{\tau_{(f_1, f'_1)}}(f_2, f'_2)\colon \tau_{(f_1, f'_1)}& \to \tau_{(f_1, f'_1);(f_2, f'_2)} &
		con_{\tau_{(f_2, f'_2)}}(f_1, f'_1)\colon \tau_{(f_2, f'_2)} &\to \tau_{(f_2, f'_2);(f_1, f'_1)}
	\end{align}
	the contraction of the respective other edge.
	Then \(\tau_{(f_1, f'_1);(f_2, f'_2)}=\tau_{(f_2, f'_2);(f_1, f'_1)}\) and
	\begin{equation}
		con_{\tau_{(f_1, f'_1)}}(f_2, f'_2)\circ con_\tau(f_1, f'_1) = con_{\tau_{(f_2, f'_2)}}(f_1, f'_1)\circ con_\tau(f_2, f'_2).
	\end{equation}
\end{lemma}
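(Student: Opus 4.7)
The plan is to prove the lemma by direct inspection of what contraction does on the underlying data, exploiting the fact that the two edges are disjoint as subsets of $F_\tau$ (every flag belongs to at most one edge, since $j_\tau$ is an involution with two-element orbits giving edges). Recall that contracting an edge $(f,f')$ acts on the graph data by removing $\{f,f'\}$ from $F_\tau$, and either identifying the two boundary vertices $\partial_\tau f$ and $\partial_\tau f'$ (when the edge connects distinct vertices) or leaving the single boundary vertex unchanged (when the edge is a loop). All other flags, their boundaries under $\partial_\tau$, and the involution $j_\tau$ restricted to the remaining flags are unchanged; the new $j_h$ is empty, since contraction creates no new tails.

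First I would compare the underlying graphs $\tau_{(f_1,f'_1);(f_2,f'_2)}$ and $\tau_{(f_2,f'_2);(f_1,f'_1)}$. In both cases the flag set equals $F_\tau\setminus\{f_1,f'_1,f_2,f'_2\}$ and the involution is $j_\tau$ restricted to this set. For the vertex set, let $\sim_i$ be the equivalence relation on $V_\tau$ generated by $\partial_\tau f_i\sim\partial_\tau f'_i$ for $i=1,2$. Performing the contractions in either order produces the quotient of $V_\tau$ by the join $\sim_1\vee\sim_2$, which is symmetric in the two edges. The boundary map $\partial$ on the quotient sends a flag to the equivalence class of its original boundary, and this is manifestly the same in both orders. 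The genus labeling of the target, dictated by the morphism condition for modular graphs (genus of image = sum of genera in preimage plus number of contracted loops at the vertex), and the coloring inherited from $c_\tau$ on the surviving flags, likewise depend only on the joint equivalence relation, not on the order of contractions. Thus the two targets coincide as objects of $\cat{SGr^{st}}$.

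Next I would check that the two compositions agree as morphisms in $\cat{SGr^{st}}$. A morphism is the triple $(h^F,h_V,j_h)$. For either order of contractions, the induced $h^F$ is the inclusion $F_\tau\setminus\{f_1,f'_1,f_2,f'_2\}\hookrightarrow F_\tau$, the induced $h_V$ is the canonical projection $V_\tau\to V_\tau/(\sim_1\vee\sim_2)$, and the involution $j_h$ on flags outside the image of $h^F$ is the involution $\{f_i,f'_i\}\mapsto \{f'_i,f_i\}$ on both contracted pairs. All three pieces of data are symmetric in the two edges, proving the commutation formula.

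The only points requiring a little care, and the place where I expect the bookkeeping to be most delicate, are the degenerate configurations: when one of the edges is a loop, when the two edges share a vertex (so that contracting the first turns the second from an edge between distinct vertices into a loop, or the reverse), or when both edges share both their boundary vertices. In each such case one must verify that the equivalence relation description above, together with the rule \enquote{number of contracted loops at a vertex contributes to its genus}, still produces identical genus labels on both sides. This follows because the set of edges actually contracted along the two composite morphisms is $\{(f_1,f'_1),(f_2,f'_2)\}$ in both orders, and the edges that become loops during the process are determined by the same combinatorial condition on the final equivalence classes; hence the total genus contribution is the same.
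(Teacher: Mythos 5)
Your proposal reaches the correct conclusion by a genuinely different route from the paper. The paper first uses the atomization of a single-edge contraction to reduce to the situation where the two edges share a vertex and the graph is covered by them, and then lists four local configurations (two loops at one vertex; a loop plus a connecting edge on two vertices; two parallel edges; a path on three vertices), asserting that each can be checked explicitly. You instead argue globally: you describe the effect of a contraction on the raw data \((F_\tau, V_\tau, \partial_\tau, j_\tau)\) and observe that the composite in either order has flag set \(F_\tau\setminus\Set{f_1,f'_1,f_2,f'_2}\), vertex set the quotient of \(V_\tau\) by the join \(\sim_1\vee\sim_2\), and the same morphism triple \((h^F,h_V,j_h)\) (your earlier aside that \(j_h\) is \enquote{empty} is a slip --- its domain is the four removed flags and it is the restriction of \(j_\tau\) --- but you state it correctly afterwards). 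This absorbs the paper's four cases into one symmetric description and is arguably more transparent; the paper's localization mainly serves to align the proof with the structure of Proposition~\ref{prop:OperadConstruction}.

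The one step where your justification does not hold as written is the genus bookkeeping, which you rightly single out as the delicate point. The claim that \enquote{the edges that become loops during the process are determined by the same combinatorial condition on the final equivalence classes} fails for two parallel edges joining \(v_1\) to \(v_2\): whichever edge is contracted second is the one that has become a loop, so the \emph{set} of edges-that-become-loops genuinely depends on the order. What is order-independent is the genus increment itself. The morphism condition for modular graphs must be read as requiring the genus of an image vertex to be the sum of the genera of its preimages plus the first Betti number of the contracted subgraph over it (the literal count of loops in \(\tau\) would give \(0\) instead of \(1\) for two parallel edges). That Betti number, \(\#E_c-\#V_c+1\) per connected component of the contracted subgraph, depends only on the set \(\Set{(f_1,f'_1),(f_2,f'_2)}\) and the induced partition of the preimage vertices, hence is symmetric in the two edges. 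Substituting this for your stated reason closes the argument; one may also check that it reproduces the step-by-step computation in each of the paper's four configurations.
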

\begin{proof}
	Again, the atomization of a contraction of a single edge is an identity on all except one target vertex.
	If the two contractions do not share a vertex, they obviously commute.
	If they share a vertex, it remains to consider the following four cases:
	\begin{itemize}
		\item
			\(\tau\) consist of a single vertex and \(con_\tau(f_i,f'_i)\), \(i=1,2\) are contractions of the only two loops,
		\item
			\(\tau\) consists of two vertices and \(con_\tau(f_1, f'_1)\) is the contraction of the only loop and \(con_\tau(f_2, f'_2)\) the contraction of the only edge connecting the two vertices,
		\item
			\(\tau\) consists of two vertices and the \(con_\tau(f_i, f'_i)\) are contractions of the only two edges connecting the two vertices,
		\item
			\(\tau\) consist of three vertices and \(con_\tau(f_i, f'_i)\) are the contractions of the only two edges connecting the three vertices.
	\end{itemize}
	In all of the above cases one can verify explicitly that the contractions \(con_{(f_i, f'_i)}\) commute.
\end{proof}

Suppose now that we have an operad \(\cat{O}\colon \cat{SGr^{st}}\to \cat{C}\) in the sense of~\cite[Definition~1.6.1]{BM-GOIC}.
That is, \(\cat{O}\) is a symmetric monoidal functor that sends graftings to isomorphisms.
Hence, in particular, the total grafting morphism \(\circ_\sigma\) from Equation~\eqref{eq:TotalGrafting} is sent to an isomorphism for any graph \(\sigma\) and it follows that
\begin{equation}
	\cat{O}(\sigma)
	= \cat{O}(\circ_\sigma)\left(\cat{O}\left(\coprod_{v\in V_\sigma} \sigma_v\right)\right)
	= \cat{O}(\circ_\sigma)\left(\bigotimes_{v\in V}\cat{O}(\sigma_v)\right).
\end{equation}
This determines \(\cat{O}(\sigma)\) in terms of the operation of \(\cat{O}\) on the corollas \(\sigma_v\) up to the isomorphism \(\cat{O}(\circ_\sigma)\).
In the following we assume that \(\cat{O}\) acts on \(\circ_\sigma\) and any other grafting as the identity.

For any map, \(h\colon \tau\to \sigma\), the map \(\cat{O}(h)\colon \cat{O}(\tau)\to \cat{O}(\sigma)\) is then identical to \(\bigotimes_{v\in V_\sigma} \cat{O}(h_v)\) where \(h_v\colon \tau_v\to \sigma_v\) as in the atomization diagram~\eqref{diag:Atomization}.
By the decomposition of each \(h_v\) into a sequence of graftings, isomorphism and contractions of single edges, see~\eqref{diag:DecompositionMapToCorolla}, the value of \(\cat{O}\) on any map is hence completely determined by specifying its image on isomorphisms of corollas, contractions of single loops at single vertices and contractions of single edges between two vertices.
However, those images cannot be specified arbitrarily.
As the decomposition~\eqref{diag:DecompositionMapToCorolla} is not unique, the functor \(\cat{O}\) needs to respect the commutation relations explained in Lemma~\ref{lemma:CommuteIsoContraction} and Lemma~\ref{lemma:CommuteContractions}.
The following Proposition~\ref{prop:OperadConstruction}, spells out those commutation relations in detail.

\begin{prop}\label{prop:OperadConstruction}
	Let \(\cat{C}\) be a category with
	\begin{itemize}
		\item
			objects \(C(g, I_{NS}, I_R)\) for every non-negative integer \(g\in \Z_{\geq 0}\), and finite sets \(I_{NS}\) and \(I_R\) such that the cardinality \(\#I_R\) is even and \(2g - 2 + \#I_{NS} + \#I_R > 0\)
		\item
			for every pair \((s_{NS}, s_R)\) of isomorphisms of finite sets \(s_{NS}\colon I_{NS}\to J_{NS}\) and \(s_R\colon I_R\to J_R\) an isomorphism
			\begin{equation}
				c_{(s_{NS}, s_R)}\colon C(g, I_{NS}, I_R)\to C(g, J_{NS}, J_R),
			\end{equation}
		\item
			for any two distinct elements \(i, i'\in I_{NS}\) and distinct elements \(j, j'\in I_R\) there are maps
			\begin{align}
				c^l_{NS}(i, i')\colon C(g, I_{NS}, I_R) &\to C(g+1, I_{NS}\setminus\Set{i, i'}, I_R), \\
				c^l_R(j, j')\colon C(g, I_{NS}, I_R) &\to C(g+1, I_{NS}, I_R\setminus\Set{j, j'}),
			\end{align}
		\item
			for any pairs of elements \((i, i')\in I_{NS}\times I'_{NS}\) and \((j,j')\in I_R\times I'_R\) maps
			\begin{align}
				\begin{split}
					\MoveEqLeft
					c_{NS}(i, i')\colon C(g, I_{NS}, I_R)\times C(g, I'_{NS}, I'_R) \\
					&\to C\left(g+g, \left(I_{NS}\cup I'_{NS}\right)\setminus\Set{i, i'}, I_R\cup I'_R\right),
				\end{split} \\
				\begin{split}
					\MoveEqLeft
					c_R(j, j')\colon C(g, I_{NS}, I_R)\times C(g, I'_{NS}, I'_R) \\
					&\to C\left(g+g, I_{NS}\cup I'_{NS}, \left(I_R\cup I'_R\right)\setminus\Set{j, j'}\right).
				\end{split}
			\end{align}
	\end{itemize}
	There is a unique operad \(\cat{O}\colon \cat{SGr^{st}}\to \cat{C}\) such that
	\begin{itemize}
		\item
			for any SUSY corolla \(\tau\)
			\begin{equation}
				\cat{O}(\tau) = C(g_\tau, F_{\tau, NS}, F_{\tau, R}),
			\end{equation}
		\item
			for any isomorphism \(h\colon \tau\to \sigma\) of SUSY corollas
			\begin{equation}
				\cat{O}(h) = c_{\left({\left(h^F|_{F_{\sigma, NS}}\right)}^{-1}, {\left(h^F|_{F_{\sigma, R}}\right)}^{-1}\right)},
			\end{equation}
		\item
			for any grafting \(h\colon \tau\to \sigma\), that is \(h_V\) and \(h^F\) are identities, \(\cat{O}(h)\) is the identity.
		\item
			for any contraction \(con^l_{(f, f')}\colon \tau\to \tau\) of the loop specified by \(f, f'\in F_{\tau}\) of the same color \(c_\tau(f)\in \Set{NS, R}\) at a SUSY graph~\(\tau\) with a single vertex
			\begin{equation}
				\cat{O}\left(con^l_{(f, f')}\right)= c^l_{c_\tau(f)}(f, f'),
			\end{equation}
		\item
			for any contraction \(con_{(f, f')}\colon \tau\to\sigma\) of the edge in a graph \(\tau\) consisting of two SUSY corollas \(\tau_1\), \(\tau_2\) grafted at the edge specified by \((f, f')\in F_{\tau_1}\times F_{\tau_2}\) of the same color \(c_\tau(f)\in \Set{NS, R}\)
			\begin{equation}
				\cat{O}\left(con_{(f, f')}\right)= c_{c_\tau(f)}(f, f'),
			\end{equation}
	\end{itemize}
	if and only if the following conditions are satisfied:
	\begin{enumerate}
		\item\label{item:OperadConstruction:IsoGroup}
			The isomorphisms \(c_{(s_{NS}, s_R)}\) respect the composition of isomorphisms of finite sets.
			That is, for two pairs of isomorphisms of finite sets
			\begin{align}
				(s_{NS}\colon I_{NS}\to J_{NS} &, s_R\colon I_R\to J_R) &
				(s'_{NS}\colon J_{NS}\to K_{NS} &, s'_R\colon J_R\to K_R)
			\end{align}
			it holds
			\begin{equation}
				c_{\left(s'_{NS}\circ s_{NS}, s'_R\circ s_R\right)}=c_{\left(s'_{NS}, s'_R\right)}\circ c_{\left(s_{NS}, s_R\right)}.
			\end{equation}
		\item\label{item:OperadConstruction:CommutationIsoLoop}
			The isomorphisms \(c_{(s_{NS}, s_R)}\) \enquote{commute} with the maps \(c^l_{NS}(i, i')\) and \(c^l_R(j, j')\).
			That is, for any pair \((s_{NS}\colon I_{NS}\to J_{NS}, s_R\colon I_R\to J_R)\) of isomorphisms of finite sets, elements \(i, i'\in I_{NS}\) and \(j, j'\in I_R\)
			\begin{align}
				c_{(\overline{s}_{NS}, s_R)}\circ c^l_{NS}(i, i') &= c^l_{NS}(s_{NS}(i), s_{NS}(i'))\circ c_{(s_{NS}, s_R)} \\
				c_{(s_{NS}, \overline{s}_R)}\circ c^l_R(j, j') &= c^l_R(s_R(j), s_R(j'))\circ c_{(s_{NS}, s_R)}
			\end{align}
			Here
			\begin{align}
				\overline{s}_{NS}\colon I_{NS}\setminus \Set{i,i'} &\to J_{NS}\setminus\Set{s_{NS}(i), s_{NS}(i')} \\
				\overline{s}_R\colon I_R\setminus \Set{i,i'} &\to J_R\setminus\Set{s_R(i), s_R(i')}
			\end{align}
			are the induced restricted isomorphisms of finite sets.
		\item\label{item:OperadConstruction:CommutationIsoEdge}
			The isomorphisms \(c_{(s_{NS}, s_R)}\) \enquote{commute} with the maps \(c_{NS}(i, i')\) and \(c_R(j, j')\).
			That is, for two pairs of isomorphisms of finite sets
			\begin{align}
				(s_{NS}\colon I_{NS}\to J_{NS} &, s_R\colon I_R\to J_R) &
				(s'_{NS}\colon I'_{NS}\to J'_{NS} &, s'_R\colon I'_R\to J'_R)
			\end{align}
			it holds
			\begin{align}
				c_{NS}\left(s_{NS}(i), s'_{NS}(i')\right) \circ \left(c_{(s_{NS}, s_R)}\times c_{(s'_{NS}, s'_R)}\right)
				&= c_{\left(\overline{s_{NS}\cup s'_{NS}}, s_R\cup s'_R\right)} \circ c_{NS}(i, i'), \\
				c_R\left(s_R(j), s'_R(j')\right) \circ \left(c_{\left(s_{NS}, s_R\right)}\times c_{(s'_{NS}, s'_R)}\right)
				&= c_{\left(s_{NS}\cup s'_{NS}, \overline{s_R\cup s'_R}\right)} \circ c_R(i, i').
			\end{align}
			Here
			\begin{align}
				s_{NS}\cup s'_{NS}\colon I_{NS}\cup I'_{NS} &\to J_{NS}\cup J'_{NS} \\
				\overline{s_{NS}\cup s'_{NS}}\colon \left(I_{NS}\cup I'_{NS}\right)\setminus \Set{i,i'} &\to \left(J_{NS}\cup J'_{NS}\right)\setminus\Set{s_{NS}(i), s'_{NS}(i')} \\
				s_R\cup s'_R\colon I_R\cup I'_R &\to J_R\cup J'_R \\
				\overline{s_R\cup s'_R}\colon \left(I_R\cup I'_R\right)\setminus \Set{i,i'} &\to \left(J_R\cup J_R\right)\setminus\Set{s_R(i), s'_R(i')}
			\end{align}
			are the induced isomorphisms of finite sets.
		\item\label{item:OperadConstruction:CommutationLoopLoop}
			The maps \(c^l_{NS}(i, i')\) and \(c^l_R(j, j')\) \enquote{commute} with each other.
			That is for any distinct elements \(i_1, i'_1, i_2, i'_2\in I_{NS}\) and distinct \(j_1, j'_1, j_2, j'_2\in I_R\) it holds
			\begin{align}
				c^l_{NS}(i_1,i'_1)\circ c^l_{NS}(i_2, i'_2) &= c^l_{NS}(i_2, i'_2) \circ c^l_{NS}(i_1, i'_1) \\
				c^l_{NS}(i_1, i'_1)\circ c^l_R(j_1, j'_1) &= c^l_R(j_1, j'_1) \circ c^l_{NS}(i_1, i'_1) \\
				c^l_R(j_1, j'_1)\circ c^l_R(j_2, j'_2) &= c^l_R(j_2, j'_2) \circ c^l_R(j_1, j'_1)
			\end{align}
		\item\label{item:OperadConstruction:CommutationLoopEdge}
			The maps \(c^l_{NS}(i,i')\) and \(c^l_R(j,j')\) \enquote{commute} with \(c_{NS}(i,i')\) and \(c_R(j,j')\).
			That is, for any distinct \(i_1, i'_1, i_2\in I_{NS}\), \(i'_2\in I'_{NS}\) and \(j_1, j'_1,j_2\in I_R\), \(j'_2\in I'_R\) we have
			\begin{align}
				c^l_{NS}(i_1, i'_1)\circ c_{NS}(i_2, i'_2) &= c_{NS}(i_2, i'_2)\circ \left(c^l_{NS}(i_1, i'_1) \times \id_{C(g', I'_{NS}, I'_R)}\right) \\
				c^l_R(j_1, j'_1)\circ c_{NS}(i_2, i'_2) &= c_{NS}(i_2, i'_2)\circ \left(c^l_R(j_1, j'_1) \times \id_{C(g', I'_{NS}, I'_R)}\right) \\
				c^l_{NS}(i_1, i'_1)\circ c_R(j_2, j'_2) &= c_R(j_2, j'_2)\circ \left(c^l_{NS}(i_1, i'_1) \times \id_{C(g', I'_{NS}, I'_R)}\right) \\
				c^l_R(j_1, j'_1)\circ c_R(j_2, j'_2) &= c_R(j_2, j'_2)\circ \left(c^l_R(j_1, j'_1) \times \id_{C(g', I'_{NS}, I'_R)}\right)
			\end{align}
			Furthermore, for any distinct pairs \((i_1, i'_1), (i_2, i'_2) \in I_{NS}\times I'_{NS}\) and distinct pairs \((j_1, j'_1), (j_2, j'_2) \in I_R\times I'_R\)
			\begin{align}
				c^l_{NS}(i_2, i'_2)\circ c_{NS}(i_1, i'_1) &= c^l_{NS}(i_1, i'_1)\circ c_{NS}(i_2, i'_2) \\
				c^l_R(j_2, j'_2)\circ c_{NS}(i_1, i'_1) &= c^l_{NS}(i_1, i'_1)\circ c_R(j_2, j'_2) \\
				c^l_R(j_2, j'_2)\circ c_R(j_1, j'_1) &= c^l_R(j_1, j'_1)\circ c_R(j_2, j'_2)
			\end{align}
		\item\label{item:OperadConstruction:CommutationEdgeEdge}
			The maps \(c_{NS}(i, i')\) and \(c_R(j, j')\) \enquote{commute} with each other.
			That is for any pairs of elements \((i_1, i'_1)\in I_{NS}\times I'_{NS}\), \((i_2, i'_2)\in I'_{NS}\times I''_{NS}\) with \(i'_1\neq i_2\) and \((j_1, j'_1)\in I_R\times I'_R\), \((j_2, j'_2)\in I'_R\times I''_R\) with \(j'_1\neq j_2\) it holds
			\begin{align}
				\begin{split}
					\MoveEqLeft
					c_{NS}(i_1, i'_1)\circ \left(\id_{C(g, I_{NS}, I_R)} \times c_{NS}(i_2, i'_2)\right) \\
					&= c_{NS}(i_2, i'_2)\circ \left(c_{NS}(i_1, i'_1) \times \id_{C(g'', I''_{NS}, I''_R)}\right)
				\end{split} \\
				\begin{split}
					\MoveEqLeft
					c_{NS}(i_1, i'_1)\circ \left(\id_{C(g, I_{NS}, I_R)} \times c_R(j_2, j'_2)\right) \\
					&= c_R(j_2, j'_2)\circ \left(c_{NS}(i_1, i'_1) \times \id_{C(g'', I''_{NS}, I''_R)}\right)
				\end{split} \\
				\begin{split}
					\MoveEqLeft
						c_R(j_1, j'_1)\circ \left(\id_{C(g, I_{NS}, I_R)} \times c_R(j_2, j'_2)\right) \\
						&= c_R(j_2, j'_2)\circ \left(c_R(j_1, j'_1) \times \id_{C(g'', I''_{NS}, I''_R)}\right)
				\end{split}
			\end{align}
	\end{enumerate}
\end{prop}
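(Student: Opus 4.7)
The proof has two directions. For the \emph{only if} direction, suppose such an operad \(\cat{O}\) exists. Condition~\ref{item:OperadConstruction:IsoGroup} is then just functoriality of \(\cat{O}\) restricted to the groupoid of SUSY corolla isomorphisms. Conditions~\ref{item:OperadConstruction:CommutationIsoLoop} and~\ref{item:OperadConstruction:CommutationIsoEdge} follow by applying \(\cat{O}\) to the squares produced by Lemma~\ref{lemma:CommuteIsoContraction}, in the two cases (single-vertex loop, two-vertex edge) explicitly identified at the end of its proof. Conditions~\ref{item:OperadConstruction:CommutationLoopLoop},~\ref{item:OperadConstruction:CommutationLoopEdge} and~\ref{item:OperadConstruction:CommutationEdgeEdge} follow by applying \(\cat{O}\) to the squares of Lemma~\ref{lemma:CommuteContractions}, again distinguishing the four geometric cases listed in its proof according to whether the two contracted edges form loops, parallel edges or a chain.

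For the \emph{if} direction, the plan is to build \(\cat{O}\) by hand. On objects, I set \(\cat{O}(\sigma) = \bigotimes_{v \in V_\sigma} C(g_\sigma(v), F_{\sigma, NS}(v), F_{\sigma, R}(v))\), which is forced up to canonical isomorphism by the requirement that graftings (and in particular the total grafting~\eqref{eq:TotalGrafting}) go to identities. For a morphism \(h\colon \tau \to \sigma\), I use the atomization diagram~\eqref{diag:Atomization} to reduce to defining \(\cat{O}(h_v)\) on each morphism \(h_v\colon \tau_v \to \sigma_v\) to a corolla, and then apply the decomposition~\eqref{diag:DecompositionMapToCorolla} to write \(h_v\) as a grafting followed by a sequence of isomorphisms and single-edge contractions (each contraction being either of a loop at one vertex or of an edge between two vertices). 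Finally, define \(\cat{O}(h_v)\) as the composition in \(\cat{C}\) of the prescribed isomorphisms \(c_{(s_{NS}, s_R)}\), loop contractions \(c^l_{NS}, c^l_R\) and edge contractions \(c_{NS}, c_R\) corresponding to each factor.

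The main obstacle is \textbf{well-definedness}, since neither the atomization nor the decomposition~\eqref{diag:DecompositionMapToCorolla} is unique. Two decompositions of the same \(h_v\) differ by a finite sequence of local moves: swapping the order of two successive single-edge contractions, moving an isomorphism past a contraction, or replacing two successive isomorphisms by their composite. By the same case-analysis used in the proofs of Lemmas~\ref{lemma:CommuteIsoContraction} and~\ref{lemma:CommuteContractions}, each such local move corresponds to exactly one of the commutation relations~\ref{item:OperadConstruction:IsoGroup}--\ref{item:OperadConstruction:CommutationEdgeEdge}, so the hypotheses force the two compositions in \(\cat{C}\) to agree. Hence \(\cat{O}(h_v)\) and therefore \(\cat{O}(h) = \bigotimes_v \cat{O}(h_v)\) is independent of all choices.

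It remains to verify functoriality and the symmetric monoidal structure. Functoriality \(\cat{O}(h' \circ h) = \cat{O}(h') \circ \cat{O}(h)\) is immediate from the observation that concatenating the decompositions of \(h\) and \(h'\) yields a decomposition of \(h' \circ h\), together with the well-definedness just established. Compatibility with disjoint union is built into the vertex-wise definition of \(\cat{O}\) on objects and morphisms, and graftings are sent to identities by construction. Uniqueness of \(\cat{O}\) is forced by the fact that every morphism in \(\cat{SGr^{st}}\) admits a decomposition of the prescribed form, so the images on corollas, corolla isomorphisms and single-edge contractions determine the functor completely.
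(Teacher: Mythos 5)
Your proposal is correct and follows essentially the same route as the paper: the functor \(\cat{O}\) is built via the atomization diagram~\eqref{diag:Atomization} and the decomposition~\eqref{diag:DecompositionMapToCorolla}, with well-definedness reduced to the commutation relations~\ref{item:OperadConstruction:IsoGroup}--\ref{item:OperadConstruction:CommutationEdgeEdge}, which the paper likewise matches case by case to the special cases in the proofs of Lemma~\ref{lemma:CommuteIsoContraction} and Lemma~\ref{lemma:CommuteContractions}. Your explicit treatment of the \enquote{only if} direction and of functoriality by concatenating decompositions is consistent with (and slightly more detailed than) the paper's argument, which obtains the atomization of a composite from the atomizations of the factors via graftings.
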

\begin{proof}
	We have spelled out the construction of \(\cat{O}\) on graphs and graph morphisms before Proposition~\ref{prop:OperadConstruction}.
	The construction is independent of the decomposition of graph morphisms because the commutativity conditions~\ref{item:OperadConstruction:CommutationIsoLoop}--\ref{item:OperadConstruction:CommutationEdgeEdge} imply that Lemma~\ref{lemma:CommuteIsoContraction} and Lemma~\ref{lemma:CommuteContractions} also hold in \(\cat{C}\).
	Here, the condition~\ref{item:OperadConstruction:CommutationIsoLoop} corresponds to the first special case in the proof of Lemma~\ref{lemma:CommuteIsoContraction}, and condition~\ref{item:OperadConstruction:CommutationIsoEdge} to the second.
	The conditions~\ref{item:OperadConstruction:CommutationLoopLoop}--\ref{item:OperadConstruction:CommutationEdgeEdge} correspond to the four special cases in the proof of Lemma~\ref{lemma:CommuteContractions} in the same order.

	It remains to show that \(\cat{O}\) is a functor, that is for two graph morphisms \(h\colon \tau\to \sigma\) and \(f\colon \sigma \to \rho\) we have \(\cat{O}(f\circ h) = \cat{O}(f)\circ \cat{O}(h)\).
	To this end note that the atomization of the composition \(f\circ h\) can be obtained from the atomizations of \(f\) and \(h\) via graftings.
\end{proof}

Note that for any SUSY operad \(\cat{O}\colon \cat{SGr^{st}}\to \cat{C}\) there is an induced modular operad \(\cat{\tilde{O}}=\cat{O}\circ\cat{I}\colon \cat{MGr^{st}}\to \cat{SGr^{st}}\) where \(\cat{I}\colon \cat{MGr^{st}}\to \cat{SGr^{st}}\) is the inclusion functor.
This operad \(\cat{\tilde{O}}\) can be specified by less data:
\begin{cor}\label{cor:ConstructionModularOperad}
	Let \(\cat{C}\) be a category with objects \(C(g, I, \emptyset)\), and morphisms \(c_{(s, \emptyset)}\), \(c^l_{NS}(i,i')\) and \(c_{NS}(i,i')\) satisfying the conditions~\ref{item:OperadConstruction:IsoGroup}--\ref{item:OperadConstruction:CommutationEdgeEdge} of Proposition~\ref{prop:OperadConstruction} for the special case \(I_R=\emptyset\).
	There is a modular operad \(\cat{\tilde{O}}\colon \cat{MGr^{st}}\to \cat{C}\) such that corollas are mapped to \(C(g, I, \emptyset)\), graftings to identities, automorphisms of corollas to \(c_{(s, \emptyset)}\) and contractions of single loops at a vertex to \(c^l(i,i')\) and contractions of a single edge between two vertices to \(c_{NS}(i,i')\).
\end{cor}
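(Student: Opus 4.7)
The plan is to construct $\cat{\tilde{O}}$ by mirroring the construction in the proof of Proposition~\ref{prop:OperadConstruction}, restricted to the category $\cat{MGr^{st}}$. The inclusion functor $\cat{I}\colon \cat{MGr^{st}}\to \cat{SGr^{st}}$ is a full embedding whose image consists of those SUSY graphs every flag of which is colored $NS$; under this identification, isomorphisms of modular corollas correspond to SUSY corolla isomorphisms with no $R$-component, and contractions of modular edges and loops correspond to contractions of $NS$-edges and $NS$-loops. Hence all data required by the construction, when restricted to modular graphs, is precisely the data $C(g, I, \emptyset)$, $c_{(s,\emptyset)}$, $c^l_{NS}(i,i')$ and $c_{NS}(i,i')$ furnished by the hypothesis.

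Concretely, on a modular graph $\sigma$ I would set $\cat{\tilde{O}}(\sigma) = \bigotimes_{v\in V_\sigma} C(g(v), F_\sigma(v), \emptyset)$, guided by the total grafting~\eqref{eq:TotalGrafting}, and declare all graftings to act as identities. On a morphism $h\colon\tau\to\sigma$, I would apply atomization~\eqref{diag:Atomization} to reduce to maps between modular corollas $h_v\colon\tau_v\to\sigma_v$, then decompose each $h_v$ as in~\eqref{diag:DecompositionMapToCorolla} into a grafting followed by isomorphisms and single-edge contractions, sending each factor to its image under the supplied $c_{(s,\emptyset)}$, $c^l_{NS}$ or $c_{NS}$. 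An equivalent and arguably more compact alternative is to extend the given data to the full SUSY setting by declaring $C(g, I_{NS}, I_R) := C(g, I_{NS}\cup I_R, \emptyset)$ together with the obvious corresponding morphisms, verify that the full list of conditions of Proposition~\ref{prop:OperadConstruction} then reduces to the hypotheses of the corollary, invoke the proposition to obtain a SUSY operad $\cat{O}$, and finally set $\cat{\tilde{O}} := \cat{O}\circ \cat{I}$.

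The key step in either variant is well-definedness: the decomposition~\eqref{diag:DecompositionMapToCorolla} is not canonical, and its image must be independent of the chosen factorization. This is exactly what Lemma~\ref{lemma:CommuteIsoContraction} and Lemma~\ref{lemma:CommuteContractions} are designed to provide, and inspection of the special cases arising in their proofs shows that each involves only $NS$-loops and $NS$-edges between vertices of modular corollas, so the $I_R=\emptyset$ restrictions of conditions~\ref{item:OperadConstruction:IsoGroup}--\ref{item:OperadConstruction:CommutationEdgeEdge} already suffice. Functoriality then follows, exactly as in the proof of Proposition~\ref{prop:OperadConstruction}, from the observation that the atomization of a composition of modular morphisms arises from the atomizations of the factors via graftings; I do not expect any genuinely new obstacle beyond those already handled by the proposition.
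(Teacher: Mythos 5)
Your first variant is exactly the paper's (implicit) argument: the corollary is stated without separate proof because the image of \(\cat{I}\colon \cat{MGr^{st}}\to\cat{SGr^{st}}\) is a full subcategory closed under atomization and the decomposition~\eqref{diag:DecompositionMapToCorolla}, so the construction and well-definedness checks of Proposition~\ref{prop:OperadConstruction} restrict verbatim to the NS-only data. Your second variant also works but is slightly less economical, since defining \(C(g,I_{NS},I_R):=C(g,I_{NS}\cup I_R,\emptyset)\) forces you to also supply the Ramond morphisms and re-verify the conditions involving them, which the hypothesis of the corollary does not provide directly.
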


\begin{rem}\label{rmk:kLabeling}
	Even though the objects \(C(g, I_{NS}, I_R)\) and \(C(g, I'_{NS}, I'_R)\) are isomorphic if \(\#I_{NS}=\#I'_{NS}\) and \(\#I_R=\#I'_R\) it is still important to label them with finite sets instead of their cardinality because the isomorphisms are not canonical.
	Otherwise, the image under \(\cat{O}\) of an isomorphism \(\tau\to \sigma\) between two corollas both having genus \(g\), \(k_{NS}\) Neveu--Schwarz tails and \(k_R\) Ramond tails would be an automorphism of \(C(g, k_{NS}, k_R)\).
	In general, however, there is no canonical isomorphism \(\tau\to \sigma\) which could be mapped to the identity of \(C(g, k_{NS}, k_R)\).

	If one insists on labeling the images of corollas by integers such as \(C(g, k_{NS}, k_R)\), one can choose a skeleton \(\cat{Sk SGr^{st}}\) of the category of stable SUSY graphs together with an equivalence of categories between \(\cat{Sk SGr^{st}}\) and \(\cat{SGr^{st}}\) and construct the operad as a functor \(\cat{Sk SGr^{st}}\to \cat{C}\).
	The choice of skeleton and equivalence avoids the above ambiguity because it consistently selects representatives of isomorphism classes of objects and morphisms.
\end{rem}

\subsection{Moduli spaces of stable SUSY curves}\label{SSec:ModuliSpacesOfStableSUSYCurves}
In~\cite{FKP-MSSCCLB} it was proved (Theorem A), that for each pair of finite sets $(I_{NS}, I_R)$ with \(\#I_R\) even, the functor of families of stable SUSY curves of genus $g$ with punctures labeled by \(I_{NS}\) and \(I_R\) respectively is represented by a smooth and proper Deligne--Mumford superstack $\overline{\mathcal{M}}_{g, I_{NS}, I_R}$.
The reader can find a categorical background of stacks in~\cite{O-ASS}.
Basic geometric definitions of the theory of superstacks are presented in~\cite{BR-SMSCRP}.

The reduced space \({\left(\overline{\mathcal{M}}_{g,I_{NS},I_R}\right)}_{red}\) is the moduli stack of twisted spin curves.
Here a twisted spin curve is a pair \(({M}_{red},S)\) consisting of an algebraic stable curve \(M_{red}\) together with a twisted spinor bundle satisfying~\eqref{eq:TwistedSpinorBundle}, see, for example,~\cite{AJ-MTSC}.
We denote the moduli stack of pairs \((M_{red}, S)\) by \(\overline{M}^{spin}_{g, I_{NS},I_R}={\left(\overline{\mathcal{M}}_{g,I_{NS},I_R}\right)}_{red}\).

The moduli space \(\overline{M}^{spin}_{g,I_{NS},I_R}\) is a bundle over the moduli space \(\overline{M}_{g, I_{NS} \cup I_R}\) of algebraic curves with marked points labeled by \(I_{NS}\cup I_R\) where the map
\begin{equation}
	\pi\colon \overline{M}^{spin}_{g, I_{NS},I_R}\to \overline{M}_{g, I_{NS} \cup I_R}
\end{equation}
forgets the spinor bundle, that is, sends \((M_{red}, S)\) to \(M_{red}\).
In the case of genus zero the map \(\pi\) is an isomorphism for all pairs \((I_{NS}, I_R=\emptyset)\).

Let \(s=(s_{NS}\colon I_{NS}\to J_{NS}, s_R\colon I_R\to J_R)\) be a pair of isomorphisms of finite sets and \((M, \Set{s_i}, \mathcal{R}, \delta)\) be a stable SUSY curve with Neveu--Schwarz punctures labeled by \(I_{NS}\) and Ramond punctures labeled by \(I_R\).
We denote by \(s(M, \{s_i\}, \mathcal{R}, \delta)\) the stable SUSY curve obtained from \((M, \{s_i\}, \mathcal{R}, \delta)\) by relabeling the Neveu--Schwarz punctures by \(s_{NS}\) and the Ramond divisors by \(s_R\).
This renumbering of the punctures yields an isomorphism of moduli stacks~\(a_s\colon \overline{\mathcal{M}}_{g, I_{NS}, I_R} \to \overline{\mathcal{M}}_{g, J_{NS}, J_R}\).

The pair of isomorphisms \(s\) also yields an isomorphism \(a_{s,red}\colon \overline{M}^{spin}_{g,I_{NS}, I_R}\to \overline{M}^{spin}_{g,J_{NS}, J_R}\), as well as an automorphism \(a_{s_{NS}\cup s_R}\colon \overline{M}_{g,I_{NS}\cup I_R} \to \overline{M}_{g,J_{NS}\cup J_R}\) when considering the induced isomorphism \(s_{NS}\cup s_R\colon I_{NS}\cup I_R\to J_{NS}\cup J_R\).
Those isomorphisms fit into a commutative diagram
\begin{diag}
	\matrix[mat, column sep=huge](m){
		\overline{\mathcal{M}}_{g,I_{NS}, I_R} & \overline{\mathcal{M}}_{g,J_{NS}, J_R} \\
		\overline{M}^{spin}_{g,I_{NS}, I_R} & \overline{M}^{spin}_{g,J_{NS}, J_R} \\
		\overline{M}_{g,I_{NS}\cup I_R} & \overline{M}_{g,J_{NS}\cup J_R} \\
	} ;
	\path[pf]{
		(m-1-1) edge node[auto]{\(a_s\)} (m-1-2)
		(m-2-1) edge node[auto]{\(a_{s,red}\)} (m-2-2)
			edge node[auto]{\(i_{red}\)} (m-1-1)
			edge node[auto]{\(\pi\)} (m-3-1)
		(m-2-2) edge node[auto]{\(i_{red}\)} (m-1-2)
			edge node[auto]{\(\pi\)} (m-3-2)
			(m-3-1) edge node[auto]{\(a_{s_{NS}\cup s_R}\)} (m-3-2)
	};
\end{diag}
If \(J_{NS}=I_{NS}\) and \(J_R=I_R\) the isomorphism \(a_s\), \(a_{s,red}\) and \(a_{s_{NS}\cup s_R}\) are automorphisms of the moduli stacks.
If \(\#I\geq 3\) all automorphisms of \(\overline{M}_{0,I}\) are obtained from automorphisms \(I\to I\), and if \(\#I\geq 5\) the automorphisms are in bijection with elements of the automorphism group of \(I\), see~\cites{BM-TAGM0n}{M-TAGMgn}.

The stack \(\overline{\mathcal{M}}_{g, I_{NS}, I_R}\) is endowed with a \emph{boundary Cartier divisor} $\Delta = \Delta_{NS} + \Delta_R$, with normal crossings, see Section~8 of~\cite{FKP-MSSCCLB}.
The boundary divisor \(\Delta_{NS}\) encodes stable SUSY curves with at least one Neveu--Schwarz node and the boundary divisor \(\Delta_R\) encodes stable SUSY curves with at least one Ramond node.

Stable SUSY curves can be glued along punctures of the same type, as was worked out in~\cite[Section~8]{FKP-MSSCCLB}.
It is possible to either glue two punctures of the same type of a single SUSY curve or to glue together two separate SUSY curves along punctures of the same type.
While two Neveu--Schwarz punctures can be uniquely glued, the gluing of Ramond punctures has a degree of freedom.
We will now discuss the induced gluing maps on the moduli stacks of SUSY curves in the different cases.

Let \((M, \Set{s_i}, \mathcal{R}, \delta)\) be a SUSY curve of genus \(g\) with Neveu--Schwarz punctures labeled by \(I_{NS}\) and Ramond punctures by \(I_R\) and \((M', \Set{s'_i}, \mathcal{R'}, \delta')\) a SUSY curve of genus \(g'\) with Neveu--Schwarz punctures labeled by \(I'_{NS}\) and Ramond punctures labeled by \(I'_R\).
For any pair \((i, i')\in I_{NS}\times I'_{NS}\) we can glue \(M\) and \(M'\) along the Neveu--Schwarz punctures labeled by \(i\) and \(i'\) respectively to obtain a SUSY curve of genus \(g+g'\) with Neveu--Schwarz punctures labeled by \(\left(I_{NS}\cup I'_{NS}\right)\setminus \Set{i,i'}\) and Ramond punctures labeled by \(I_R\cup I'_R\).
On the level of moduli stacks this gluing yields an embedding of codimension~\(1|0\), see~\cite[Lemma~8.10]{FKP-MSSCCLB}:
\begin{equation}\label{eq:NSGluingSeparateComponents}
gl_{NS}(i,i')\colon \overline{\mathcal{M}}_{g, I_{NS}, I_{R}}\times\overline{\mathcal{M}}_{g', I'_{NS}, I'_{R}} \to \overline{\mathcal{M}}_{g+g', \left(I_{NS}\cup I'_{NS}\right)\setminus \Set{i,i'}, I_{R}\cup I'_R}
\end{equation}
This gluing map is compatible with the gluing on the level of spin moduli spaces and classical moduli spaces.
That is, the following diagram commutes:
\begin{diag}
	\matrix[mat, column sep=huge](m){
		\overline{\mathcal{M}}_{g, I_{NS}, I_{R}}\times\overline{\mathcal{M}}_{g', I'_{NS}, I'_{R}} & \overline{\mathcal{M}}_{g+g', \left(I_{NS}\cup I'_{NS}\right)\setminus \Set{i,i'}, I_{R}\cup I'_R} \\
		\overline{M}^{spin}_{g, I_{NS}, I_{R}}\times\overline{M}^{spin}_{g', I'_{NS}, I'_{R}} & \overline{M}^{spin}_{g+g', \left(I_{NS}\cup I'_{NS}\right)\setminus \Set{i,i'}, I_{R}+I'_R} \\
		\overline{M}_{g, I_{NS} \cup I_{R}}\times\overline{M}_{g', I'_{NS} \cup I'_{R}} & \overline{M}_{g+g', \left(I_{NS}\cup I'_{NS} \cup I_{R} \cup I'_R\right)\setminus \Set{i,i'}} \\
	} ;
	\path[pf]{
		(m-1-1) edge node[auto]{\(gl_{NS}(i,i')\)} (m-1-2)
		(m-2-1) edge node[auto]{\({gl_{NS}(i,i')}_{red}\)} (m-2-2)
			edge node[auto]{\(i_{red}\)} (m-1-1)
			edge node[auto]{\(\pi\)} (m-3-1)
		(m-2-2) edge node[auto]{\(i_{red}\)} (m-1-2)
			edge node[auto]{\(\pi\)} (m-3-2)
		(m-3-1) edge node[auto]{\(gl(i,i')\)} (m-3-2)
	};
\end{diag}

Gluing the two Neveu--Schwarz punctures with label \(i\) and \(i'\) of a single stable SUSY curve of genus \(g\) with Neveu--Schwarz punctures labeled by \(I_{NS}\) and Ramond punctures labeled by \(I_R\) yields a stable SUSY curve of genus \(g+1\) and Neveu--Schwarz punctures labeled by \(I_{NS}\setminus \Set{i,i'}\) and Ramond punctures labeled by \(I_R\).
On the level of moduli space this gluing yields an embedding \(gl^l_{NS}(i,i')\) which makes the following diagram commutative:
\begin{diag}
	\matrix[mat, column sep=huge](m){
		\overline{\mathcal{M}}_{g, I_{NS}, I_{R}} & \overline{\mathcal{M}}_{g+1, I_{NS}\setminus\Set{i,i'}, I_{R}} \\
		\overline{M}^{spin}_{g, I_{NS}, I_{R}} & \overline{M}^{spin}_{g+1, I_{NS}\setminus\Set{i,i'}, I_{R}} \\
		\overline{M}_{g, I_{NS} \cup I_{R}} & \overline{M}_{g+1, \left(I_{NS} \cup I_{R}\right)\setminus\Set{i,i'}} \\
	} ;
	\path[pf]{
		(m-1-1) edge node[auto]{\(gl^l_{NS}(i,i')\)} (m-1-2)
		(m-2-1) edge node[auto]{\({gl^l_{NS}(i,i')}_{red}\)} (m-2-2)
			edge node[auto]{\(i_{red}\)} (m-1-1)
			edge node[auto]{\(\pi\)} (m-3-1)
		(m-2-2) edge node[auto]{\(i_{red}\)} (m-1-2)
			edge node[auto]{\(\pi\)} (m-3-2)
		(m-3-1) edge node[auto]{\(gl^l(i,i')\)} (m-3-2)
	};
\end{diag}

Similar gluing maps for Ramond punctures are not defined uniquely because Ramond punctures are of dimension \(0|1\).
According to~\cite[Lemma~8.11]{FKP-MSSCCLB}, there is a principal fiber bundle \(\mathcal{P}_j\to\overline{\mathcal{M}}_{g, I_{NS}, I_R}\) that parametrize preferred coordinate systems of the \(j\)-th Ramond puncture.
The structure group of \(\mathcal{P}_j\) is of the form \(\Z_2\times \C^{0|1}\).
It follows from~\cite[Lemma~8.13]{FKP-MSSCCLB} that in order to glue two Ramond punctures \(j\) and \(j'\) one needs to choose an isomorphism between \(\mathcal{P}_j\) and \({[\ic]}_*\mathcal{P}_{j'}\), where \({[\ic]}_*\mathcal{P}_{j'}\) is the rescaling of the fibers of \(\mathcal{P}_{j'}\) by the complex unity \(\ic\).
Hence there are bundles of isometries and gluing maps
\begin{diag}\label{eq:RGluingSingleComponent}
	\matrix[mat, column sep=huge](m){
		\Iso(\mathcal{P}_j, {[\ic]}_*\mathcal{P}_{j'}) & \overline{\mathcal{M}}_{g + 1, I_{NS}, I_{R}\setminus\Set{j,j'}} \\
		\overline{\mathcal{M}}_{g, I_{NS}, I_R} & \\
	} ;
	\path[pf]{
		(m-1-1) edge node[auto]{\(gl^l_R(j,j')\)} (m-1-2)
			edge (m-2-1)
	};
\end{diag}
\begin{diag}\label{eq:RGluingSeparateComponents}
	\matrix[mat](m){
		\Iso(\mathcal{P}_j, {[\ic]}_*\mathcal{P}_{j'}) & \overline{\mathcal{M}}_{g + g',I_{NS} \cup I'_{NS}, \left(I_R \cup I'_R\right)\setminus\Set{j,j'}} \\
		\overline{\mathcal{M}}_{g, I_{NS}, I_R}\times \overline{\mathcal{M}}_{g', I'_{NS}, I'_R} &\\
	} ;
	\path[pf]{
		(m-1-1) edge node[auto]{\(gl_R(j,j')\)} (m-1-2)
			edge (m-2-1)
	};
\end{diag}
In both cases the gluing maps are embeddings of codimension \(1|0\), see~\cite[Lemma~8.14]{FKP-MSSCCLB}.

Also the gluing of Ramond punctures is compatible with the gluing of marked points on the moduli space of stable curves:
\begin{diag}\matrix[mat](m){
		&\Iso(\mathcal{P}_j, {[\ic]}_*\mathcal{P}_{j'}) & \\
		\overline{\mathcal{M}}_{g, I_{NS}, I_R} & & \overline{\mathcal{M}}_{g + 1, I_{NS}, I_{R}\setminus\Set{j,j'}} \\
		& {\Iso(\mathcal{P}_j, {[\ic]}_*\mathcal{P}_{j'})}_{red} & \\
		\overline{M}^{spin}_{g, I_{NS}, I_R} & & \overline{M}^{spin}_{g + 1, I_{NS}, I_{R}\setminus\Set{j,j'}} \\
		\overline{M}_{g, I_{NS}\cup I_R} & & \overline{M}_{g + 1, \left(I_{NS} \cup I_{R}\right)\setminus\Set{j,j'}} \\
	} ;
	\path[pf]{
		(m-1-2) edge node[auto]{\(gl^l_R(j,j')\)} (m-2-3)
			edge (m-2-1)
		(m-3-2) edge node[auto]{\({gl^l_R(j,j')}_{red}\)} (m-4-3)
			edge (m-4-1)
			edge node[auto]{\(i_{red}\)} (m-1-2)
		(m-4-1) edge node[auto]{\(i_{red}\)} (m-2-1)
			edge node[auto]{\(\pi\)} (m-5-1)
		(m-4-3) edge node[auto]{\(i_{red}\)} (m-2-3)
			edge node[auto]{\(\pi\)} (m-5-3)
		(m-5-1) edge node[auto]{\(gl^l(j,j')\)} (m-5-3)
	};
\end{diag}
Here, the bundle \({\Iso(\mathcal{P}_j, {[\ic]}_*\mathcal{P}_{j'})}_{red}\) is the \(\Z_2\)-bundle classifying the sign-choice in identifying the fibers of the spinor bundle of the pair \((M_{red}, S)\) at the two Ramond nodes with labels \(j\) and \(j'\).
A similar diagram exists for the gluing map \(gl_R(j,j')\).

One can work out commutativity relations between the isomorphism \(a_s\) induced by relabelings and the different gluing maps.
Likewise, as gluing is an operation local to the two punctures, there are various commutativity relations between the gluing maps.
One finds that they are analogous to the relations stated in the conditions~\ref{item:OperadConstruction:CommutationIsoLoop}--\ref{item:OperadConstruction:CommutationEdgeEdge} of Proposition~\ref{prop:OperadConstruction}.

For further information about moduli superspaces, see~\cites{DW-SMNP}{BR-SMSCRP}{CV-MPSC}{KSY-SQCI}.

\subsection{SUSY Operad in moduli superstacks}\label{SSec:SUSYOperadInModuliStacks}
In this section we will construct the operad \(\cat{O}\colon \cat{SGr^{st}}\to \cat{SM}\) of moduli superstacks of SUSY curves that extends the modular operad \(\cat{o}\colon \cat{MGr^{st}}\to \cat{M}\) of moduli stacks of algebraic curves.

Recall the following construction of the modular operad \(\cat{o}\):
Let \(\cat{M}\) be the full subcategory of the category of Deligne--Mumford stacks whose objects are finite products of moduli stacks \(\overline{M}_{g, I}\).
The operad \(\cat{o}\) sends modular corollas \(\tau\) to \(\cat{o}(\tau) = \overline{M}_{g, F_\tau}\), isomorphisms \(h\colon \tau\to \sigma\) of modular corollas to \(a_{{\left(h^F\right)}^{-1}}\), contractions of a single loop at one vertex to \(gl^l(i,i')\) and contraction of a single edge between two vertices to \(gl(i,i')\).
By Corollary~\ref{cor:ConstructionModularOperad}, this yields a well defined operad \(\cat{o}\colon \cat{MGr^{st}}\to \cat{M}\).

In this section we prove the following theorem:
\Operad{}

The main difficulty in the construction of the operad \(\cat{O}\colon \cat{SGr^{st}}\to \cat{SM}\) is to construct an appropriate symmetric monoidal category \(\cat{SM}\) such that Proposition~\ref{prop:OperadConstruction} can be applied.
The category \(\cat{SM}\) needs to contain products of moduli superstacks, their isomorphisms induced by relabelings and gluing maps.
In particular, we need to construct \(\cat{SM}\) containing the gluing maps \(gl_R(j,j')\) and \(gl_R^l(j,j')\) which are not morphisms between superstacks in the usual sense.
On the other hand, we have to choose the morphisms in \(\cat{SM}\) in such a way that the functor \(\cat{P}\) can be constructed as \(\cat{SM}\to \cat{SM_{red}}\to \cat{M}\) and \(\cat{O}\) extends the modular operad \(\cat{o}\).
We will construct the category in several steps.

\begin{defn}
	Let \(\cat{C_1}\) be the full subcategory of the category of Deligne--Mumford superstacks whose objects are finite products of compact moduli stacks \(\overline{\mathcal{M}}_{g,I_{NS}, I_R}\).
\end{defn}
The category \(\cat{C_1}\) is a symmetric monoidal category with respect to the product \(\prod\) of stacks.
Using Corollary~\ref{cor:ConstructionModularOperad}, one can construct a modular operad in~\(\cat{C_1}\):
\begin{prop}\label{prop:Operad1}
	The map
	\begin{equation}
		\tau \mapsto \prod_{v\in V_\tau} \overline{\mathcal{M}}_{g(v), F_\tau(v), \emptyset}
	\end{equation}
	can be extended to a modular operad \(\cat{O_1}\colon \cat{MGr^{st}}\to \cat{C_1}\) such that graftings are mapped to the identity, automorphism of corollas to the induced automorphisms of moduli superstacks and contractions to gluings of Neveu--Schwarz punctures.
\end{prop}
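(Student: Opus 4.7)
The plan is to apply Corollary~\ref{cor:ConstructionModularOperad} directly to the data provided by the moduli theory of stable SUSY curves with only Neveu--Schwarz punctures that was recalled in Section~\ref{SSec:ModuliSpacesOfStableSUSYCurves}. Concretely, I would set
\begin{equation}
    C(g, I, \emptyset) \coloneqq \overline{\mathcal{M}}_{g, I, \emptyset}
\end{equation}
as objects of \(\cat{C_1}\), let the isomorphisms \(c_{(s,\emptyset)}\) be the relabeling isomorphisms \(a_s\colon \overline{\mathcal{M}}_{g,I,\emptyset}\to\overline{\mathcal{M}}_{g,J,\emptyset}\), let the loop-contraction morphisms \(c^l_{NS}(i,i')\) be the self-gluings \(gl^l_{NS}(i,i')\) from Section~\ref{SSec:ModuliSpacesOfStableSUSYCurves}, and let the edge-contraction morphisms \(c_{NS}(i,i')\) be the gluings \(gl_{NS}(i,i')\) from~\eqref{eq:NSGluingSeparateComponents}. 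Since no Ramond data enters, the specific difficulties that require the bundles \(\Iso(\mathcal{P}_j, [\ic]_*\mathcal{P}_{j'})\) in the Ramond case from~\eqref{eq:RGluingSingleComponent} and~\eqref{eq:RGluingSeparateComponents} are bypassed entirely, and all data live inside the category \(\cat{C_1}\) of Deligne--Mumford superstacks.

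Next I would verify the six compatibility conditions of Proposition~\ref{prop:OperadConstruction} in the special case \(I_R=\emptyset\). Condition~\ref{item:OperadConstruction:IsoGroup} is the functoriality of the assignment \(s\mapsto a_s\) under composition of relabelings, which is immediate from the construction of \(a_s\) as a renumbering of punctures. Conditions~\ref{item:OperadConstruction:CommutationIsoLoop} and~\ref{item:OperadConstruction:CommutationIsoEdge} say that the relabeling isomorphisms commute with the gluing maps in the expected way; on moduli-theoretic level these amount to the observation that relabeling punctures and then gluing is the same as gluing and then relabeling the remaining punctures, which follows from the naturality of the gluing construction in the labeling sets. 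Conditions~\ref{item:OperadConstruction:CommutationLoopLoop}--\ref{item:OperadConstruction:CommutationEdgeEdge} express that gluings at disjoint pairs of punctures commute; since gluing is local to the punctures being identified, performing two such gluings in either order produces canonically the same stable SUSY curve, hence the same morphism of moduli superstacks. All these statements have already been announced at the end of Section~\ref{SSec:ModuliSpacesOfStableSUSYCurves}, so this verification step is essentially a matter of spelling them out in diagrammatic form.

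The main conceptual point, and the only place where care is required, is condition~\ref{item:OperadConstruction:IsoGroup} for the full relabeling group \(\Aut I\): one must not just glue two curves, but also check that the chosen assignment \(s\mapsto a_s\) is strictly functorial rather than functorial up to a natural isomorphism. This is safe here because \(a_s\) is induced by the actual renumbering of the indexing sets of sections, and the composition of two renumberings is the renumbering by the composition. The commutativity checks in~\ref{item:OperadConstruction:CommutationLoopLoop}--\ref{item:OperadConstruction:CommutationEdgeEdge} are genuinely strict equalities on moduli superstacks, not merely up to \(2\)-isomorphism, because each side represents the same functor of families of glued SUSY curves. Having verified all six conditions, Corollary~\ref{cor:ConstructionModularOperad} produces the desired modular operad \(\cat{O_1}\colon\cat{MGr^{st}}\to\cat{C_1}\), sending graftings to identities, isomorphisms of corollas to \(a_s\), and edge contractions to the corresponding NS-gluing maps, which is what the proposition asserts.
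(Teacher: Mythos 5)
Your proposal is correct and matches the paper's (essentially implicit) argument: the paper likewise obtains \(\cat{O_1}\) by feeding \(\overline{\mathcal{M}}_{g,I,\emptyset}\), the relabeling isomorphisms \(a_s\), and the Neveu--Schwarz gluings \(gl^l_{NS}(i,i')\), \(gl_{NS}(i,i')\) into Corollary~\ref{cor:ConstructionModularOperad}, relying on the commutativity relations recorded at the end of Section~\ref{SSec:ModuliSpacesOfStableSUSYCurves}. Your additional remarks on strict functoriality of \(s\mapsto a_s\) and on the Ramond difficulties being bypassed are accurate and consistent with the paper's setup.
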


In order to extend the operad \(\cat{O_1}\) to a SUSY operad, we need to define a category \(\cat{C_2}\) such that the gluings of Ramond nodes are morphisms in this category:
\begin{defn}\label{defn:WeakMaps}
	Let \(M\) and \(M'\) be objects of \(\cat{C_1}\).
	We say that \(f\colon P\to M'\) is a weak map from \(M\) to \(M'\) if \(P\) is a fiber bundle over \(M\).
	\begin{diag}
		\matrix[dr](m){
			& P &\\
			M & & M'\\
		} ;
		\path[pf]{
			(m-1-2) edge (m-2-1)
				edge node[auto]{\(f\)} (m-2-3)
		};
	\end{diag}
	Weak maps can be composed in an obvious way that is also associative.

	Let \(\cat{C_2}\) be the category whose objects are objects of \(\cat{C_1}\), that is products of moduli stacks of punctured SUSY curves.
	Morphisms in \(\cat{C_2}\) are weak maps in the above sense.
\end{defn}
One can verify that the category \(\cat{C_2}\) is a symmetric monoidal category with respect to the product of stacks.
The gluing maps \(gl_R(j, j')\) and \(gl_R^l(j,j')\) defined in Section~\ref{SSec:ModuliSpacesOfStableSUSYCurves} are morphisms in \(\cat{C_2}\).
Hence, by Proposition~\ref{prop:OperadConstruction}, one can construct an operad \(\cat{O_2}\colon \cat{SGr^{st}}\to \cat{C_2}\) such that
\begin{itemize}
	\item
		for any SUSY graph \(\tau\) \(\cat{O_2}(\tau) = \prod_{v\in V_\tau} \overline{\mathcal{M}}_{g(v), F_{\tau,NS}(v), F_{\tau, R}(v)}\),
	\item
		for any isomorphism \(h\colon \tau\to \sigma\) of SUSY corollas
		\begin{equation}
			\cat{O_2}(h) = a_{\left({\left(h^F|_{F_{\sigma, NS}}\right)}^{-1}, {\left(h^F|_{F_{\sigma, R}}\right)}^{-1}\right)},
		\end{equation}
	\item
		for any grafting \(h\colon \tau\to \sigma\), \(\cat{O_2}(h)\) is the identity,
	\item
		for any contraction \(con^l_{(f, f')}\colon \tau\to \tau\) of a loop at a SUSY corolla~\(\tau\) specified by \(f, f'\in F_{\tau}\) of the same color \(c_\tau(f)\in \Set{NS, R}\), \(\cat{O_2}(con^l_{(f, f')})= gl^l_{c_\tau(f)}(f, f')\),
	\item
		for any contraction \(con_{(f, f')}\colon \tau\to\sigma\) of the edge in a graph \(\tau\) consisting of two SUSY corollas \(\tau_1\), \(\tau_2\) grafted at the edge specified by \((f, f')\in F_{\tau_1}\times F_{\tau_2}\) of the same color \(c_\tau(f)\in \Set{NS, R}\), \( \cat{O_2}(con_{(f, f')})= gl_{c_\tau(f)}(f, f')\).
\end{itemize}

However, the category \(\cat{C_2}\) does not allow for a functor \(\cat{\Pi}\colon \cat{C_{2,red}}\to \cat{M}\).
Recall that the reduction functor \(\cat{Red}\) associates to each superstack \(M\) its reduced stack \(M_{red}\) and to each map of superstacks \(f\colon M\to M'\) its reduced maps \(f_{red}\colon M_{red}\to M'_{red}\).
This reduction can also be applied to weak maps in the sense of Definition~\ref{defn:WeakMaps} and also yields a functor \(\cat{Red}\colon \cat{C_2}\to \cat{C_{2,red}}\).
We have discussed in Section~\ref{SSec:ModuliSpacesOfStableSUSYCurves} that \({\left(\overline{\mathcal{M}}_{g, I_{NS}, I_R}\right)}_{red}=\overline{M}^{spin}_{g, I_{NS}, I_R}\) is a bundle over \(\overline{M}_{g, I_{NS}\cup I_R}\) with discrete fibers.
But not every map between bundles is a bundle map, that is, not every map \(\overline{M}^{spin}_{g,I_{NS}, I_R}\to \overline{M}^{spin}_{g', I'_{NS}, I'_R}\) descends to a map \(\overline{M}_{g, I_{NS}\cup I_R}\to \overline{M}_{g', I'_{NS}\cup I'_R}\).
This is fixed by the following definition:
\begin{defn}
	A weak map \(f\colon P\to M'\) between \(M\) and \(M'\in \cat{C_1}\) is said to project if there is a map \(\cat{\Pi}f\colon \cat{\Pi} M_{red}\to \cat{\Pi} M'_{red}\) such that the following diagram commutes:
	\begin{diag}
		\matrix[dr](m){
			& P_{red} &\\
			M_{red} & & M'_{red}\\
			\cat{\Pi} M_{red} & & \cat{\Pi} M'_{red}\\
		} ;
		\path[pf]{
			(m-1-2) edge (m-2-1)
				edge node[auto]{\(f_{red}\)} (m-2-3)
			(m-2-1) edge node[auto]{\(\pi\)} (m-3-1)
			(m-2-3) edge node[auto]{\(\pi\)} (m-3-3)
			(m-3-1) edge node[auto]{\(\cat{\Pi}f\)} (m-3-3)
		};
	\end{diag}
	Here \(\cat{\pi}\colon M_{red} \to \cat{\Pi} M_{red}\) (respectively \(\pi\colon M'_{red}\to \cat{\Pi} M'\)) is the projection map from the product of spin moduli spaces \(M_{red}\) to the product of moduli spaces \(\cat{\Pi}M_{red}\) obtained by forgetting the spin structures.

	We denote by \(\cat{SM}\) the category whose objects are objects of \(\cat{C_1}\), that is finite products of moduli superstacks.
	Morphisms of \(\cat{SM}\) are morphism from \(\cat{C_2}\) that project.
	We denote the corresponding reduction functor by \(\cat{Red}\colon \cat{SM}\to \cat{SM_{red}}\) and the projection functor by \(\cat{\Pi}\colon \cat{SM_{red}}\to \cat{M}\).
\end{defn}
The category \(\cat{SM}\) is a symmetric monoidal category and contains the isomorphisms of moduli superstacks induced by relabeling as well as all gluing maps of both Neveu--Schwarz and Ramond type as was discussed in Section~\ref{SSec:ModuliSpacesOfStableSUSYCurves}.
Consequently, one can restrict the operad \(\cat{O_2}\) to an operad \(\cat{O}\colon \cat{SGr^{st}}\to \cat{SM}\) as requested in Theorem~\ref{thm:SUSYOperad}.

The functors \(\cat{Red}\colon \cat{SM}\to \cat{SM_{red}}\) and \(\cat{\Pi}\colon \cat{SM_{red}}\to \cat{M}\) are symmetric monoidal functors.
Hence we obtain a commutative diagram of symmetric monoidal functors:
\begin{diag}\label{diag:SupermodularVsModularOperad}
	\matrix[mat](m){
		\cat{SGr^{st}} & \cat{SM} \\
			& \cat{SM_{red}}\\
		\cat{MGr^{st}} & \cat{M} \\
	} ;
	\path[pf]{
		(m-1-1) edge node[auto]{\(\cat{O}\)} (m-1-2)
			edge node[auto]{\(\cat{F}\)} (m-3-1)
			edge node[auto]{\(\cat{O_{red}}\)} (m-2-2)
		(m-1-2) edge node[auto]{\(\cat{Red}\)} (m-2-2)
		edge [bend left=70] node[auto]{\(\cat{P}\)} (m-3-2)
		(m-2-2) edge node[auto]{\(\cat{\Pi}\)} (m-3-2)
		(m-3-1) edge node[auto]{\(\cat{o}\)} (m-3-2)
	};
\end{diag}
This completes the proof of Theorem~\ref{thm:SUSYOperad}.

Note that \(\cat{O_{red}}=\cat{Red}\circ\cat{O}\) is the previously unknown SUSY operad of moduli stacks of spin curves.

As an application, we use the operad \(\cat{O}\) to calculate the dimension of the (compactified) moduli space of SUSY curves with prescribed dual graph \(\tau\).
Let \(\tau\) be a connected SUSY graph and let \(c\colon \tau\to \sigma\) be the full contraction.
That is, \(\sigma\) is a corolla of genus \(g(\tau)\) with Neveu--Schwarz flags \(T_{\tau, NS}\) and Ramond flags \(T_{\tau, R}\).
Then \(\cat{O}(c)\) is given by a diagram of the form
\begin{diag}
	\matrix[mat, column sep=large](m){
		P & \overline{\mathcal{M}}_{g(\tau), T_{NS}, T_R}\\
		\prod_{v\in V_\tau} \overline{\mathcal{M}}_{g(v), F_{\tau, NS}(v), F_{\tau, R}(v)} &\\
	} ;
	\path[pf]{
		(m-1-1) edge node[auto]{\(gl\)} (m-1-2)
			edge (m-2-1)
	};
\end{diag}
The image of \(gl\) in \(\overline{\mathcal{M}}_{g(\tau), T_{\tau, NS}, T_{\tau, R}}\) is \(\overline{\mathcal{M}}_\tau\), the closed moduli space of SUSY curves of dual graph \(\tau\).
Here \(\overline{\mathcal{M}}_\tau\) is described as product over the moduli spaces of the vertices together with the gluing data contained in \(P\).

We can use the dimension of the bundle \(P\) to calculate the dimension of \(\overline{\mathcal{M}}_\tau\).
The fiber dimension of \(P\) is \(0|\#E_{\tau, R}\).
The base space has the even dimension
\begin{equation}
	\sum_{v\in V_\tau}\left(3g_\tau(v) - 3 + \#F_{\tau, NS}(v) + \#F_{\tau, R}(v)\right)
\end{equation}
which coincides with the even dimension of \(P\).
Using the formula for the genus of the graph and that the number of flags coincides with the number of markings plus twice the numbers of edges
\begin{align}
	\sum_{v\in V_\tau} \left(g_\tau(v) - 1\right) &= g_\tau - 1 - \#E_\tau \\
	\sum_{v\in V_\tau} \#F_{\tau, NS}(v) &= \#F_{\tau, NS} = \#T_{\tau, NS} + 2\#E_{\tau, NS} \\
	\sum_{v\in V_\tau} \#F_{\tau, R}(v) &= \#F_{\tau, R} = \#T_{\tau, R} + 2\#E_{\tau, R}
\end{align}
Hence for the even dimension of \(P\) we have
\begin{equation}
	\begin{split}
		d_e(P) &= \sum_{v\in V_\tau}\left(3g_\tau(v) - 3 + \#F_{\tau, NS}(v) + \#F_{\tau, R}\right) \\
		&= 3g_\tau - 3 - 3\#E_\tau + \#T_{\tau, NS} + 2\#E_{\tau, NS} + \#T_{\tau, R} + 2\#E_{\tau, R} \\
		&= 3g_\tau - 3 + \#T_\tau - \#E_\tau
	\end{split}
\end{equation}
Similarly for the odd dimension of \(P\), taking into account the fiber dimension
\begin{equation}
	\begin{split}
		d_o(P) &= \sum_{v\in V_\tau}\left(2g_\tau(v) - 2 + \#F_{\tau, NS}(v) + \frac12\#F_{\tau, R}\right) + \#E_{\tau, R}\\
		&= 2g_\tau - 2 - 2\#E_\tau + \#T_{\tau, NS} + 2\#E_{\tau, NS} + \frac12\#T_{\tau, R} + \#E_{\tau, R} + \#E_{\tau, R} \\
		&= 2g_\tau - 2 + \#T_{\tau, NS} + \frac12\#T_{\tau, R}
	\end{split}
\end{equation}
As all gluing maps are embeddings, it follows that \(\overline{\mathcal{M}}_\tau\subset \overline{\mathcal{M}}_{g_\tau, T_{\tau, NS}, T_{\tau, R}}\) is a subspace of codimension \(\#E_\tau |0\).

\printbibliography

\textsc{Enno Keßler\newline
Max-Planck-Institut für Mathematik,
Vivatsgasse 7,
53111 Bonn,
Germany}\newline \texttt{kessler@mpim-bonn.mpg.de}\newline

\textsc{Yuri I. Manin\newline
Max-Planck-Institut für Mathematik,
Vivatsgasse 7,
53111 Bonn,
Germany}\newline \texttt{manin@mpim-bonn.mpg.de}\newline

\textsc{Yingying Wu\newline
Center of Mathematical Sciences and Applications,
Harvard University,
20~Garden Street,
Cambridge, MA 02138,
USA}\newline
\texttt{ywu@cmsa.fas.harvard.edu} \newline

\end{document}